\newcommand{\N}{\mathbb N}
\newcommand{\R}{\mathbb R}
\newcommand{\la}{\left\langle}
\newcommand{\ra}{\right\rangle}
\newcommand{\mc}{\mathcal}
\DeclareMathOperator{\diag}{diag}
\numberwithin{equation}{section}
\theoremstyle{plain}
\newtheorem{thm}{Theorem}[section]
\newtheorem{lem}[thm]{Lemma}
\newtheorem{proposition}[thm]{Proposition}
\newtheorem{remark}[thm]{Remark}
\newcounter{tmp}
\title[Hyperbolic systems with non-diagonalisable principal part]{
Hyperbolic systems with non-diagonalisable principal part and variable multiplicities, I. Well-posedness}
\author[Claudia Garetto]{Claudia Garetto}
\address{
  Claudia Garetto:
  \endgraf
  Department of Mathematical Sciences
  \endgraf
  Loughborough University
  \endgraf
  Loughborough, Leicestershire, LE11 3TU
  \endgraf
  United Kingdom
  \endgraf
  {\it E-mail address} {\rm c.garetto@lboro.ac.uk}
  }
  \author[Christian J\"ah]{Christian J\"ah}
\address{
  Christian J\"ah:
  \endgraf
  Department of Mathematical Sciences
  \endgraf
  Loughborough University
  \endgraf
  Loughborough, Leicestershire, LE11 3TU
  \endgraf
  United Kingdom
  \endgraf
  {\it E-mail address} {\rm c.jaeh@lboro.ac.uk}
  }
\author[Michael Ruzhansky]{Michael Ruzhansky}
\address{
  Michael Ruzhansky:
  \endgraf
  Department of Mathematics
  \endgraf
  Imperial College London
  \endgraf
  180 Queen's Gate, London SW7 2AZ
  \endgraf
  United Kingdom
  \endgraf
  {\it E-mail address} {\rm m.ruzhansky@imperial.ac.uk}
  }
\dedicatory{Inspired by our colleague and friend Todor Gramchev (1956-2015)}
\thanks{
The third author was supported in parts by EPSRC
 grant EP/R003025/1 and by the Leverhulme Grant RPG-2017-151. No new data was collected
 or generated during the course of research.
}
\date{}
\subjclass[2010]{Primary 35L45; Secondary 46E35;}
\keywords{Hyperbolic systems, Fourier integral operators, Sobolev spaces.}
\begin{document}

\begin{abstract}
	 In this paper we analyse the well-posedness of the Cauchy problem for a rather general class of hyperbolic systems with space-time dependent coefficients and with multiple characteristics of variable multiplicity. First, we establish a well-posedness result in anisotropic Sobolev spaces for systems with upper triangular principal part under interesting natural conditions on the orders of lower order terms below the diagonal. Namely, the terms below the diagonal at a distance $k$ to it must be of order $-k$. This setting also allows for the Jordan block structure in the system. Second, we give conditions for the Schur type triangularisation of general systems with variable coefficients for reducing them to the form with an upper triangular principal part for which the first result can be applied. We give explicit details for the appearing conditions and constructions for $2\times 2$ and $3\times 3$ systems, complemented by several examples.
\end{abstract}

\maketitle
			
\tableofcontents
		
\section{Introduction}

The main aim of this paper is to consider the Cauchy problem for hyperbolic systems \begin{equation} \label{eq:CPGO} \left\{\begin{aligned}
		& D_t u = A(t,x,D_x)u + B(t,x,D_x)u + f(t,x), \quad (t,x) \in [0,T] \times \R^n, \\
		& \left. u\right|_{t=0} = u_0, \quad x \in \R^n,
	\end{aligned} \right. \end{equation}  
with the usual notation $D_t=-{\rm i}\partial_t$ and $D_x=-{\rm i}\partial_x$. We assume that $A(t,x,D_x) = \big[ a_{ij}(t,x,D_x) \big]_{i,j=1}^m$ is an $m\times m$ matrix of pseudo-differential operators of order $1$, i.e. $a_{ij} \in C([0,T],\Psi_{1,0}^1(\R^n))$. In the first part of the paper we will also assume that 
	\begin{equation} \label{eq:LambdapN}
	A(t,x,D_x) = \Lambda(t,x,D_x) + N(t,x,D_x),
\end{equation} 
with real-valued symbols in
$$\Lambda(t,x,D_x) = \diag(\lambda_1(t,x,D_x),\lambda_2(t,x,D_x),\dots,\lambda_m(t,x,D_x)),$$ and \begin{equation*}
N(t,x,D_x) = \begin{bmatrix}
	0 & a_{12}(t,x,D_x) & a_{13}(t,x,D_x) & \cdots & a_{1m}(t,x,D_x) \\
	0 & 0 & a_{23}(t,x,D_x) & \cdots & a_{2m}(t,x,D_x) \\
	\vdots & \vdots & \vdots & \cdots & \vdots \\
	0 & 0 & 0 & \dots & a_{m-1m}(t,x,D_x)\\
	0 & 0 & 0 & \dots & 0
\end{bmatrix}.
\end{equation*} 
Finally, we assume that 
$$B(t,x,D_x) = \big[ b_{ij}(t,x,D_x) \big]_{i,j=1}^m, \quad b_{ij} \in C([0,T],\Psi_{1,0}^0(\R^n)),$$ 
is a matrix of pseudo-differential operators of order $0$. We can take any $n\geq 1$ and we can assume that $m\geq 2$ since in the case $m=1$ there are no multiplicities and thus much more is known. It is also well-known that even if all the coefficients in $A$ and $B$ depend only on time, due to multiplicities, the best one can hope for is the well-posedness of the Cauchy problem \eqref{eq:CPGO} in suitable classes of Gevrey spaces.
Thus, the main questions that we address in this paper are:

\begin{itemize}
\item[(Q1)] {\em Under what structural conditions on the zero order part $B(t,x,D_x)$ is the Cauchy problem \eqref{eq:CPGO} well-posed in $C^\infty$ or, even better, in suitable scales of Sobolev spaces?
}
\item[(Q2)] {\em Under what conditions on the general matrix $A(t,x,D_x) = \big[ a_{ij}(t,x,D_x) \big]_{i,j=1}^m$ of first order pseudo-differential operators can we reduce it (microlocally) to another system with $A$ satisfying the upper triangular condition \eqref{eq:LambdapN}? }
\end{itemize} 
Note that this paper is part of a wider analysis of hyperbolic systems with multiplicities. Here we investigate the well-posedness of these systems. In the second part of this paper we plan to carry out the microlocal analysis of their solutions.

In the case of $2\times 2$ systems the questions above have been analysed with the answer to (Q1) given by the following theorem:

\begingroup
\setcounter{tmp}{\value{thm}}
\setcounter{thm}{0} 
\renewcommand\thethm{\Alph{thm}}

\begin{thm}[{\cite[Theorem 7.2]{GramRuz2013}}] Let $m=2$. Suppose that the pseudo-differential operator ${b}_{21}$ is of order not greater than $-1$. Then the Cauchy problem \eqref{eq:CPGO} is well-posed in $C^\infty$. Moreover, it is well-posed in the anisotropic Sobolev space $\begin{bmatrix}
	H^{s_1}(\R^n) \\ H^{s_2}(\R^n)
	\end{bmatrix}$ provided $s_2-s_1 \geq 1$. In that case the solution satisfies the following estimates: \begin{equation*}
	\|u_1(t,\cdot)\|_{H^s} + \|u_2(t,\cdot)\|_{H^{s+1}} \leq c e^{ct} \left( \|u_1^0\|_{H^s} + \|u_2^0\|_{H^{s+1}} \right), \quad 0 \leq t \leq T,
	\end{equation*} for $u_j^0 \in H_{comp}^{s+j-1}(\R^n)$, $j=1,2$, with $c>0$ depending on $s$, $T$, and the support of the initial data.
\end{thm}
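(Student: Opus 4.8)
The plan is to conjugate \eqref{eq:CPGO} (with $m=2$) to a first-order pseudodifferential system whose principal part is diagonal with real symbols and whose lower order part is of order $0$, to derive the $L^2$ energy estimate for that reduced system, and then to conjugate back. Fix $s\in\R$ and set $\Theta:=\diag(\la D_x\ra^{s},\la D_x\ra^{s+1})$, an invertible Fourier multiplier commuting with $D_t$; put $v:=\Theta u$, so that $D_t v=\widetilde A v+\widetilde B v+\Theta f$ with $\widetilde A=\Theta A\Theta^{-1}$ and $\widetilde B=\Theta B\Theta^{-1}$. Using the composition calculus in $S^{m}_{1,0}(\R^n)$ one checks entry by entry that (i) $\Theta\Lambda\Theta^{-1}=\Lambda+R_1$ with $R_1\in C([0,T],\Psi^{0}_{1,0})$, since commuting $\la D_x\ra^{\pm s}$ past the scalar order-$1$ operators $\lambda_j$ produces only terms of order $\le 0$ and leaves the real principal symbols unchanged; (ii) the single nonzero entry of $\Theta N\Theta^{-1}$ is $\la D_x\ra^{s}a_{12}\la D_x\ra^{-s-1}$, of order $1-1=0$, the shift by one unit between the two components of $\Theta$ exactly absorbing the order-$1$ coupling $a_{12}$; and (iii) every entry of $\widetilde B$ has order $\le 0$ — the diagonal and $(1,2)$ entries trivially, while the $(2,1)$ entry $\la D_x\ra^{s+1}b_{21}\la D_x\ra^{-s}$ has order $1+\operatorname{ord}(b_{21})\le 0$ precisely because $b_{21}$ has order $\le-1$. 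Hence $D_tv=\Lambda v+R(t,x,D_x)v+g$, with $\Lambda$ diagonal real of order $1$, $R\in C([0,T],\Psi^{0}_{1,0})$ a $2\times2$ matrix, and $g=\Theta f$.

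For this reduced system, differentiating in $t$ and using $\partial_t v={\rm i}(\Lambda v+Rv+g)$ gives
$$\frac{d}{dt}\|v(t)\|_{L^2}^2=\big({\rm i}(\Lambda-\Lambda^{*})v,v\big)_{L^2}+2\operatorname{Re}\big({\rm i}Rv,v\big)_{L^2}+2\operatorname{Re}\big({\rm i}g,v\big)_{L^2}.$$
Since $\Lambda$ has real scalar principal symbols, $\Lambda-\Lambda^{*}\in\Psi^{0}_{1,0}$; combined with the $L^2$-boundedness of $R$ and Cauchy–Schwarz this yields $\tfrac{d}{dt}\|v\|_{L^2}^2\le c\|v\|_{L^2}^2+\|g\|_{L^2}^2$, and Gronwall's inequality gives $\|v(t)\|_{L^2}^2\le e^{ct}\big(\|v(0)\|_{L^2}^2+\int_0^t\|g(\tau)\|_{L^2}^2\,d\tau\big)$ on $[0,T]$.

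Existence for the reduced system — for smooth, compactly supported data, and then in general by density together with the above estimate, uniqueness following from the estimate applied to differences — can be obtained by solving first the decoupled scalar problems $D_tw_j=\lambda_j(t,x,D_x)w_j+g_j$, each solution operator being a bounded Fourier integral operator of order $0$ on every Sobolev space by the classical construction for scalar hyperbolic equations, and then absorbing $R$ by a convergent Duhamel/Picard iteration in $C([0,T],L^2)$; alternatively one may invoke the general theory of first-order pseudodifferential systems with real diagonal principal part and $\Psi^0$ remainder. Because $\|v(t)\|_{L^2}^2=\|u_1(t)\|_{H^s}^2+\|u_2(t)\|_{H^{s+1}}^2$, undoing $v=\Theta u$ turns the estimate above into the claimed inequality (specialising to $f=0$ for the homogeneous form displayed), which is exactly well-posedness in $\begin{bmatrix}H^{s}\\H^{s+1}\end{bmatrix}$; since any datum in the anisotropic space with $s_2-s_1\ge1$ embeds into $H^{s_1}\times H^{s_1+1}$, this covers the stated range, and running the argument for every $s$ — after a cutoff reducing general $C^\infty$ data to compactly supported data, which is the source of the dependence of $c$ on $\supp u_0$ — gives the $C^\infty$ statement.

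The main obstacle, which is also where the two hypotheses genuinely enter, is the bookkeeping of Step~1: one must check that the single conjugation $\Theta=\diag(\la D_x\ra^{s},\la D_x\ra^{s+1})$ simultaneously neutralises the order-$1$ off-diagonal coupling $a_{12}$ (forcing the second component of $v$ to be one order smoother) and keeps the $(2,1)$ entry of $B$ bounded (forcing $b_{21}$ to have order $\le-1$); these two demands are precisely compatible and pin down the gap $s_2-s_1=1$. Everything downstream is the standard energy and Fourier integral operator machinery for diagonalised hyperbolic systems.
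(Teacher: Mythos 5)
Your proof is correct, but it takes a genuinely different route from the one in the paper. The paper (following \cite{GramRuz2013}) represents the scalar propagators $G_j^0$, $G_j$ as Fourier integral operators, recasts the coupled system as a pair of integral equations \eqref{eq:u21}--\eqref{eq:u22}, substitutes the second into the first so that $u_1$ satisfies a fixed-point equation $u_1 = \tilde U_1^0 + \mathcal G_1^0 u_1$, and observes that the composition $G_1\circ a_{12}\circ G_2\circ b_{21}$ has order $1+\operatorname{ord}(b_{21})\le 0$; a Banach contraction on a short interval, iterated, and back-substitution into \eqref{eq:u22} then yield the anisotropic solution. You instead conjugate the full system at once by $\Theta=\diag(\la D_x\ra^{s},\la D_x\ra^{s+1})$, which simultaneously flattens the order-$1$ coupling $a_{12}$ to order $0$ and keeps the $(2,1)$ block bounded precisely when $\operatorname{ord}(b_{21})\le -1$; this reduces the problem to $L^2$ well-posedness of a system with real diagonal order-$1$ principal part plus a $\Psi^0$ remainder, for which the sharp G\aa rding/energy estimate and Gronwall immediately give the stated exponential bound, and existence comes from the standard theory for such systems (or from the scalar FIO propagators plus a Duhamel iteration). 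Both proofs hinge on exactly the same arithmetic balance $1+\operatorname{ord}(b_{21})\le 0$, but they package it differently: your conjugation argument is more self-contained, delivers the global-in-time Gronwall estimate in one stroke rather than by concatenating short-time contractions, and would generalise cleanly to the $m\times m$ upper-triangular case by taking $\Theta=\diag(\la D_x\ra^{s},\dots,\la D_x\ra^{s+m-1})$; the paper's FIO/fixed-point argument, on the other hand, produces an explicit parametrix for each component and sets up the inductive elimination machinery that the authors then reuse verbatim for the general $m$ (and presumably for the announced microlocal sequel). One small caveat: for $s_2-s_1>1$ strictly, the embedding $H^{s_2}\hookrightarrow H^{s_1+1}$ gives existence but not directly the statement that $u_2$ stays in $H^{s_2}$; to get the full anisotropic range you should also run your argument at the level $s'=s_2-1$ and invoke uniqueness in the weaker class, a point worth making explicit.
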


The case of systems of general size but for coefficients depending only on $t$ and for $n=1$ was also considered. More precisely, in \cite{GramOrr2011} the authors considered the Cauchy problem \begin{equation} \label{eq:CPGO_orru} \left\{\begin{aligned}
& D_t u = A(t)D_x u + B(t)D_x u + f(t,x), \quad (t,x) \in [0,T] \times \R, \\
& \left. u\right|_{t=0} = 0, \quad x \in \R,
\end{aligned} \right. \end{equation} with $A(t) = \big[ a_{ij}(t) \big]_{i,j=1}^m \in C([0,T])^{m \times m}$ in the form 
\begin{equation*}
A(t) = \Lambda(t) + N(t),
\end{equation*} 
similar to \eqref{eq:LambdapN}. They showed the following result in the absence of lower order terms and for zero Cauchy data:  
\begin{thm}[{\cite[Proposition 1]{GramOrr2011}}] Let $B(t) \equiv 0$ and let $s \in \R$. Then the Cauchy problem \eqref{eq:CPGO_orru} is $C^\infty$-well-posed. Moreover, there exist $r_1$, \dots, $r_{m-1} \in [0,1]$ such that for every
	$f \in C(\R, (H^s(\R))^m)$ identically $0$ at $t=0$ it admits a unique solution $u \in C(\R,(\mathcal S'(\R))^m)$ satisfying \begin{equation*}
	u_m \in C(\R, H^s(\R)), \quad u_{m-j} \in C(\R, (H^{s-r_1-\cdots-r_{j-1}}(\R)),
	\end{equation*} for $j = 1, \dots, m-1$, and identically $0$ at $t=0$. In particular, if $\lambda_j(t) \neq \lambda_k(t)$, $t \in \R$, $1 \leq j < k \leq m$, no loss of anisotropic regularity appears.
\end{thm}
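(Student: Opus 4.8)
The plan is to pass to the Fourier side in $x$ and reduce \eqref{eq:CPGO_orru} to an $m\times m$ linear ODE system in $t$ depending on the parameter $\xi\in\R$. Writing $V(t,\xi)=\widehat{u}(t,\cdot)(\xi)$ and $F(t,\xi)=\widehat{f}(t,\cdot)(\xi)$, with $B\equiv0$ the problem becomes $\partial_tV={\rm i}\xi(\Lambda(t)+N(t))V+{\rm i}F$, $V(0,\xi)=0$. Since $\Lambda(t)=\diag(\lambda_1(t),\dots,\lambda_m(t))$ is real, the diagonal matrix $\Theta(t,\xi)=\exp\big(-{\rm i}\xi\int_0^t\Lambda(s)\,ds\big)$ is unitary, so $W(t,\xi):=\Theta(t,\xi)V(t,\xi)$ satisfies $|W(t,\xi)|=|V(t,\xi)|$ for each fixed $t$ and it suffices to estimate $W$; a direct computation gives
\begin{equation*}
\partial_tW(t,\xi)={\rm i}\xi\,\widetilde N(t,\xi)\,W(t,\xi)+{\rm i}\widetilde F(t,\xi),
\end{equation*}
where $|\widetilde F|=|F|$ and $\widetilde N(t,\xi)$ is again strictly upper triangular, with entries $a_{kl}(t)\,e^{{\rm i}\xi\int_0^t(\lambda_l-\lambda_k)}$ for $l>k$ and $0$ otherwise.

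For the first, qualitative, statement ($C^\infty$-well-posedness) I would exploit the nilpotency of the principal part: since $\widetilde N$ is strictly upper triangular, the product of any $m$ matrices $\widetilde N(s_1,\xi),\dots,\widetilde N(s_m,\xi)$ vanishes, so the Peano–Baker (Dyson) series for the propagator $\mathcal E(t,\tau;\xi)$ of $\partial_tW={\rm i}\xi\widetilde N W$ terminates after $m$ terms,
\begin{equation*}
\mathcal E(t,\tau;\xi)=\sum_{k=0}^{m-1}({\rm i}\xi)^k\!\!\int_{\tau\le s_k\le\cdots\le s_1\le t}\!\!\widetilde N(s_1,\xi)\cdots\widetilde N(s_k,\xi)\,ds_1\cdots ds_k ,
\end{equation*}
so that $\|\mathcal E(t,\tau;\xi)\|\le C_T(1+|\xi|)^{m-1}$ on $0\le\tau\le t\le T$, using only the boundedness of the $a_{kl}$ on $[0,T]$. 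Duhamel's formula $W(t,\xi)={\rm i}\int_0^t\mathcal E(t,\tau;\xi)\widetilde F(\tau,\xi)\,d\tau$ then produces a unique $u\in C([0,T],(\mathcal S'(\R))^m)$ which vanishes at $t=0$ together with $f$ and is $C^\infty$ in $x$ whenever $f$ is; uniqueness is immediate from ODE uniqueness in $\xi$, and the global-in-$t$ statement follows since the ODE is linear with locally bounded coefficients.

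For the anisotropic, quantitative, part I would solve the triangular system from the bottom row upwards. The last equation $D_tu_m=\lambda_m(t)D_xu_m+f_m$ is scalar with unitary propagator, hence $u_m\in C(\R,H^s)$ with no loss. Once $u_m,\dots,u_{k+1}$ are found, $u_k$ solves a scalar transport equation with source $\sum_{l>k}a_{kl}(t)D_xu_l+f_k$; substituting the Duhamel representations of the lower components expresses $\widehat u_k(t,\xi)$ as a finite sum of iterated oscillatory integrals, one per descending chain $k=i_0<i_1<\cdots<i_p=l$, the chain of length $p$ carrying an explicit factor $\xi^{p}$ from the $p$ occurrences of $D_x$. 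Since $p\le l-k\le m-k$, the crude bound $|\widehat u_k(t,\xi)|\lesssim(1+|\xi|)^{m-k}\max_l\int_0^t|\widehat f_l|$ already shows that $u_{m-j}$ loses at most $j$ derivatives, so the exponents $r_1,\dots,r_{m-1}\in[0,1]$ may be taken equal to $1$. The refinement $r_i<1$ comes from the competition between the powers of $\xi$ and the oscillation: the phase of the chain integral is built from the successive differences $\lambda_{i_q}-\lambda_{i_{q-1}}$, and wherever $\lambda_{i_{q-1}}(t)\neq\lambda_{i_q}(t)$ on $[0,T]$ an integration by parts (a van der Corput estimate after localising near the contact set, in general) absorbs a factor $|\xi|^{\rho_q}$ with $\rho_q\in(0,1]$, while $\rho_q=0$ only in the fully degenerate case $\lambda_{i_{q-1}}\equiv\lambda_{i_q}$; summing the gains along the worst chain gives the claimed $r_j$. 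In particular, if the $\lambda_j$ are pairwise distinct on $[0,T]$ then $\Lambda(t)+N(t)$ has simple eigenvalues and is diagonalised by a matrix $Q(t)$ bounded together with its inverse (and, after a standard regularisation of the coefficients, smooth enough); since $Q$ depends on $t$ only, the substitution $v=Q^{-1}u$ decouples the system into $m$ scalar transport equations with unitary propagators plus zero-order coupling, so no anisotropic loss occurs and all $r_j=0$.

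The easy part here is the terminating Dyson series, which delivers $C^\infty$-well-posedness and the crude loss $m-1$ with no effort beyond continuity of the coefficients. I expect the main obstacle to be the sharp form of the anisotropic estimates: obtaining the exponents $r_j$ requires oscillatory-integral/stationary-phase bounds uniform in $\xi$ over the whole time simplex while accounting simultaneously for every competing chain and for the possibly high-order contact between eigenvalues, and — since the $\lambda_j$ are only assumed continuous — the integration-by-parts argument has to be either replaced by a cancellation estimate that does not differentiate the coefficients, or carried out after the diagonalising change of variables, which in turn forces one to control the eigenvalue gap from below and to track the dependence of all constants on $T$ and on finitely many seminorms of the $\lambda_j$.
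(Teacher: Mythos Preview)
The paper does not prove this statement: Theorem~B is quoted verbatim from \cite{GramOrr2011} as background motivation, and no argument for it is given anywhere in the present paper. The paper's own contribution is the more general Theorem~\ref{main_theo_mi}, whose proof proceeds by an entirely different route---Fourier integral operator parametrices for the scalar diagonal equations, iterated substitution from the bottom row upward, and a Banach fixed-point argument on each component in turn---and which, moreover, yields a different kind of conclusion (the data themselves must lie in an anisotropic scale, rather than all $f_j\in H^s$ with a quantified loss in the solution).

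Your approach is therefore not comparable to anything in this paper, but it is a reasonable one for the purely $t$-dependent, one-dimensional setting of Theorem~B. The terminating Dyson--Peano series argument for $C^\infty$-well-posedness is clean and correct: nilpotency of $\widetilde N$ kills the series after $m-1$ iterated integrals, and the bound $\|\mathcal E(t,\tau;\xi)\|\le C_T\langle\xi\rangle^{m-1}$ follows immediately. The bottom-up substitution for the anisotropic part is also the natural strategy and matches in spirit what the paper does (via FIOs) for Theorem~\ref{main_theo_mi}.

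The genuine gap is in the refined exponents. You correctly identify that the $r_j\in[0,1]$ must come from oscillatory cancellation in the iterated time integrals, but then immediately undercut your own argument: the $\lambda_j$ are only assumed continuous, so there is no integration by parts and no van~der~Corput lemma available as stated. The actual mechanism in \cite{GramOrr2011} is not stationary phase but rather a careful analysis of the structure of the iterated integrals combined with the specific form of the upper-triangular coupling; in particular, note that the statement as written gives $u_{m-1}\in H^s$ with \emph{no} loss (the sum $r_1+\cdots+r_{j-1}$ is empty for $j=1$), which your crude chain-counting bound $|\xi|^{m-k}$ does not deliver even before any oscillatory refinement. You would need to revisit how the first step from $u_m$ to $u_{m-1}$ already gains back the apparent derivative loss, and that is where the substance of the cited proposition lies.
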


\endgroup
\setcounter{thm}{\thetmp}

The case of (microlocally) {\em diagonalisable} systems of any order with fully variable coefficients was considered by Rozenblum \cite{Roz} under the condition of transversality of the intersecting characteristics. Also allowing the variable multiplicities, this transversality condition was later removed in \cite{KR1,KR2} with sharp $L^p$-estimates for solutions, with further applications to the spectral asymptotics of the corresponding elliptic systems. 

\smallskip
Before stating our main results and collecting some necessary basic notions we give a brief overview of the state of the art for hyperbolic equations and systems. We have a complete understanding of strictly hyperbolic systems, i.e.,  systems without multiplicities, with $C^\infty$-coefficients. This starts with the groundbreaking work of Lax \cite{Lax:57} and H\"ormander \cite{Hor:85} and heavily relies on the modern theory of Fourier integral operators (FIO). Well-posedness is here obtained in the space of distributions $\mathcal{D}'$. There are also well-posedness results for less regular coefficients with respect to $t$. For instance, well-posedness with loss of derivatives has been obtained by Colombini and Lerner in \cite{ColLer} for second order strictly hyperbolic equations with Log-Lipschitz coefficients with respect to $t$ and smooth in $x$.  It is possible to further drop the regularity in $t$ (for instance H\"older), however, this has to be balanced by stronger regularity in $x$ (Gevrey) and leads to more specific (Gevrey) well-posedness results (see \cite{ColKi:02}, \cite{KY:06} and references therein). Paradifferential techniques have been recently used for this kind of strictly hyperbolic equations by Colombini, Del Santo, Fanelli and M\'etivier in \cite{ColMet:1, ColMet:2}.

The analysis of hyperbolic equations with multiplicities (weakly hyperbolic) has started with the seminal paper by Colombini, de Giorgi and Spagnolo \cite{CDS} in the case of coefficients depending only on time.  Profound difficulties in such analysis have been exhibited by Colombini, Jannelli and Spagnolo in \cite{CS} and \cite{CJS2} showing that even the second order wave equation in $\mathbb{R}$ with smooth time-dependent propagation speed (but with multiplicity) and smooth Cauchy data need not be well-posed in $\mathcal{D}'$. However, they turn out to be well-posed in suitable Gevrey classes or spaces of ultradistributions. In the last decades many results were obtained for weakly hyperbolic equations with $t$-dependent coefficients (\cite{ColKi:02, dAKi:05, G:15, GarRuz:1, GR, GarRuz:3, KS:06}, to quote only very few).
More recently, advances in the theory of weakly hyperbolic systems with $t$-dependent coefficients have been obtained for systems of any size in presence of multiplicities with regular or low regular (H\"older) coefficients \cite{G:15, GarRuz:17-1, GarRuz:17-2}. In addition, in \cite{GJ} precise conditions on the lower order terms (Levi conditions) have been formulated to guarantee Gevrey and ultradistributional well-posedness. Previously very few results were known in the field for systems of a certain size ($2\times 2$, $3\times 3$) \cite{dAKiS:04, dAKiS:08} or of a certain form (for instance without lower order terms or with principal part of a certain form) \cite{Yu:05}.

Weakly hyperbolic equations with $x$-dependent coefficients were considered for the first time in the celebrated paper by Bronshtein \cite{Bronshtein}. As shown already in some earlier works by Ivrii, the corresponding Cauchy problem is well-posed under ``almost analytic regularity'', namely, if the coefficients and initial data are in suitable Gevrey classes.  Bronshtein's result was extended to $(t,x)$-dependent scalar equations by Ohya and Tarama  \cite{OT:84} and to systems by Kajitani and Yuzawa \cite{KY:06}. The regularity assumptions are always quite strong with respect to $x$ (Gevrey) and not below H\"older in $t$. See also \cite{ColNi, Ni:06}. Geometrical and microlocal analytic approaches are known for equations or systems under specific assumptions on the characteristics and/or lower order terms. See \cite{MU:2, Hord, IvPet, KR2, PP}, to quote only a few. 
Time-dependent coefficients of low regularity (distributional) have been considered in \cite{GarRuz:ARMA}.

In this paper we will be interested in the case of coefficients depending on both $t$ and $x$ and we will make use of the usual definitions of symbol classes. 
We say that a function $a=a(x,\xi) \in C^{\infty}(\R^n \times \R^n)$ belongs to $S^m_{1,0}( \R^n \times \R^n)$ if there exist constants $C_{\alpha,\beta} >0$ such that \begin{equation*}
		\forall \alpha,\beta \in \N_0^n \,\, : \,\, |\partial_x^\alpha \partial_\xi^\beta a(x,\xi)| \leq C_{\alpha,\beta} \la \xi \ra^{m-|\beta|} \quad \forall (x,\xi) \in \R^n \times \R^n.
	\end{equation*} 
	The set of pseudo-differential operators associated to the symbols in $S^m_{1,0}( \R^n \times \R^n)$ is denoted by $\Psi^m_{1,0}( \R^n \times \R^n)$. 

If there is no question about the domain under consideration, we will abbreviate the symbol- and operator-classes by $S^m_{1,0}$ and $\Psi^m_{1,0}$, respectively, or simply by $S^m$ and $\Psi^m$.

We also denote by $C([0,T], S_{1,0}^m(\R^n \times \R^n))$ the space of all symbols $a(t,x,\xi)\in S_{1,0}^m(\R^n\times\R^n)$ which are continuous with respect to $t$. The set of operators associated to the symbols in $C([0,T], S_{1,0}^m(\R^n \times \R^n))$ is denoted by $C([0,T], \Psi_{1,0}^m(\R^n \times \R^n))$. 

Again, if there is no question about the domain under consideration, we will abbreviate the symbol- and operator-classes by $C S_{1,0}^m$ and $C\Psi^m_{1,0}$, respectively, or simply by $C S^m$ and $C\Psi^m$.

\smallskip
Let us give our main result concerning the first question (Q1) for the systems with the principal part $A$ satisfying the upper triangular condition \eqref{eq:LambdapN}. 
Here, $f_k$, $u_k$ and $u_k^0$, for $k=1,\dots,m$, stand for the components of the vectors 
$f$, $u$ and $u_0$, respectively.

\begin{thm} \label{main_theo_mi}
Let $n\geq 1$, $m\geq 2$, and let
\begin{equation}
\label{eq_CP_mi}
\left\{\begin{array}{l}
D_t u = A(t,x,D_x)u + B(t,x,D_x)u + f(t,x), \quad (t,x) \in [0,T] \times \R^n, \\
\left.u \right|_{t=0} = u_0(x), \quad x \in \R^n,
\end{array} \right.
\end{equation} where $A(t,x,D_x) = [a_{ij}(t,x,D_x)]_{i,j=1}^m$ is an upper-triangular matrix of pseudo-diffe\-ren\-tial operators of order $1$ in the form \eqref{eq:LambdapN}, and $B(t,x,D_x) = [b_{ij}(t,x,D_x)]_{i,j=1}^m$  is a matrix of pseudo-differential operators of order $0$,  continuous with respect to $t$. 
Hence, if 
\begin{center}
the lower order terms $b_{ij}$ belong to $C([0,T], \Psi^{j-i})$ for $i> j$, 
\end{center}
$u^0_k\in H^{s+k-1}(\R^n)$ and $f_k\in C([0,T],H^{s+k-1})$ for $k=1,\dots,m$, then \eqref{eq_CP_mi} has a unique anisotropic Sobolev solution $u$, i.e.,  $u_k\in C([0,T], H^{s+k-1})$ for $k=1,\dots, m$.
\end{thm}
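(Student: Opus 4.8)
The plan is to remove the anisotropy by a diagonal Fourier multiplier, thereby reducing \eqref{eq_CP_mi} to a first order system whose principal part is the original diagonal matrix $\Lambda$ with real symbols and all of whose remaining terms are of order $0$; for such a system the classical energy method in $(H^s)^m$ yields well-posedness.

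\textbf{Step 1: reduction to a balanced system.} Set $\Theta=\Theta(D_x)=\diag\big(\la D_x\ra^{0},\la D_x\ra^{1},\dots,\la D_x\ra^{m-1}\big)$ and put $v=\Theta u$, so $v_k=\la D_x\ra^{k-1}u_k$. Since $\la D_x\ra^{k-1}$ is $t$-independent, $u_k\in C([0,T],H^{s+k-1})$ for all $k$ if and only if $v\in C([0,T],(H^s)^m)$, and likewise $\Theta u_0\in(H^s)^m$, $\Theta f\in C([0,T],(H^s)^m)$; moreover $v$ solves $D_t v=\Theta(A+B)\Theta^{-1}v+\Theta f$, $v|_{t=0}=\Theta u_0$. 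I would then examine $\Theta(A+B)\Theta^{-1}$ entrywise: its $(i,j)$ entry is $\la D_x\ra^{i-1}(a_{ij}+b_{ij})\la D_x\ra^{-(j-1)}$, an operator of order $\mathrm{ord}(a_{ij}+b_{ij})+(i-j)$. On the diagonal one gets $\la D_x\ra^{k-1}\lambda_k\la D_x\ra^{-(k-1)}=\lambda_k$ modulo $C\Psi^0$ by the symbolic calculus; for $i<j$ (the entries of $N$ and the strictly upper part of $B$) the order is $1+(i-j)\le 0$, resp. $0+(i-j)<0$; and for $i>j$ the hypothesis $b_{ij}\in C\Psi^{j-i}$ gives order $(j-i)+(i-j)=0$. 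Hence
\begin{equation*}
D_t v=\Lambda(t,x,D_x)v+R(t,x,D_x)v+g,\qquad v|_{t=0}=v_0,
\end{equation*}
with $R\in C([0,T],\Psi^0)^{m\times m}$, $g=\Theta f\in C([0,T],(H^s)^m)$, $v_0=\Theta u_0\in(H^s)^m$, and $\Lambda$ the original diagonal matrix with real symbols of order $1$. (The Jordan block case, in which some $\lambda_k$ coincide and the off-diagonal entries of $N$ are elliptic of order $1$, is simply absorbed into $R$ here.)

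\textbf{Step 2: a priori estimate and uniqueness.} For the reduced system I would differentiate $E(t):=\|v(t)\|_{(H^s)^m}^2=\|\la D_x\ra^s v(t)\|_{(L^2)^m}^2$ and use $\partial_t v=i(\Lambda+R)v+ig$. Commuting $\la D_x\ra^s$ past $\Lambda$ costs an operator of order $s$ applied to $v$, which is $O(\|v\|_{H^s}^2)$ when paired with $\la D_x\ra^s v$; the leading contribution $2\,\mathrm{Re}\,\langle i\Lambda w,w\rangle$ with $w=\la D_x\ra^s v$ equals $\langle i(\Lambda-\Lambda^*)w,w\rangle$, and $\Lambda-\Lambda^*\in C\Psi^0$ because the $\lambda_k$ are real, so this is again $O(\|v\|_{H^s}^2)$; the $R$-term is $O(\|v\|_{H^s}^2)$ since $R$ is of order $0$; and the forcing term contributes at most $\|g\|_{H^s}^2+\|v\|_{H^s}^2$. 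Therefore $\frac{d}{dt}E\le cE+\|g(t)\|_{(H^s)^m}^2$, and Gronwall's inequality gives
\begin{equation*}
\|v(t)\|_{(H^s)^m}^2\le e^{ct}\Big(\|v_0\|_{(H^s)^m}^2+\int_0^t\|g(\tau)\|_{(H^s)^m}^2\,d\tau\Big),\qquad 0\le t\le T,
\end{equation*}
which in particular forces uniqueness.

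\textbf{Step 3: existence, and conclusion.} For existence I would use the classical scheme for symmetric-hyperbolic-type systems: the formal adjoint $L^*=D_t-(\Lambda+R)^*$ has principal part again diagonal with real symbols and hence satisfies the analogous estimate backward in time, so a Hahn--Banach/duality argument produces a weak solution $v\in L^\infty([0,T],(H^s)^m)$ of $Lv=g$, $v|_{t=0}=v_0$; alternatively one regularises the symbols, solves the approximate Cauchy problems and passes to the limit using the uniform bound of Step~2. Continuity in time, $v\in C([0,T],(H^s)^m)$, then follows from the equation by a standard approximation argument, and setting $u=\Theta^{-1}v$ gives the asserted solution with $u_k\in C([0,T],H^{s+k-1})$. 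The only genuine obstacle is Step~1: checking that conjugation by $\Theta$ turns both the strictly upper triangular part $N$ and the sub-diagonal part of $B$ into operators of order $\le 0$, which is exactly where, and why sharply, the order condition ``$b_{ij}\in C\Psi^{j-i}$ for $i>j$'' is needed --- with any larger order a genuinely first order coupling would survive below the diagonal and the $(H^s)^m$ energy estimate of Step~2 would break down.
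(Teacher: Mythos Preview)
Your argument is correct and is a genuinely different, in fact more direct, route than the one taken in the paper. The paper never conjugates by a diagonal weight; instead it works componentwise with the FIO propagators $G_j^0$, $G_j$ of the scalar equations $D_t-\lambda_j-b_{jj}$, rewrites each $u_i$ as an integral equation coupling it to the other $u_j$, and then performs an iterative back-substitution: starting from $u_m$, one successively expresses $u_{m-1},u_{m-2},\dots$ in terms of $u_j$ with $j$ strictly smaller, at each stage inverting an operator $L_k=I-\mathcal G_k^0$ by a Neumann series on a short time interval. The order condition $b_{ij}\in C\Psi^{j-i}$ is what makes each $\mathcal G_k^0$ of order $0$, hence a small-time contraction, and what forces the initial-data/forcing terms $\widetilde U_k^0$ to land in $H^s$ only when $U^0_\ell\in H^{s+\ell-k}$. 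At the end one obtains a fixed-point equation for $u_1$ alone and concludes by Banach's fixed point theorem, recovering the other $u_k$ by substitution.

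Your gauge transformation by $\Theta=\diag(\la D_x\ra^{k-1})$ bypasses all of this FIO machinery: it converts the anisotropic Sobolev scale into the isotropic one and simultaneously turns the upper-triangular first-order part $N$ and the sub-diagonal part of $B$ into zero-order perturbations, reducing the problem to the classical energy method for a system with diagonal real principal part. This is shorter and arguably more robust for the pure well-posedness statement; the paper's approach, on the other hand, yields an explicit parametrix-type representation of the solution in terms of the FIOs $G_j$, which is the natural starting point for the microlocal analysis announced for the sequel, and it makes transparent exactly which compositions $G_i\circ a_{ij}\circ\cdots\circ G_k\circ b_{k\ell}$ must be of order $\le 0$, hence why the condition on $b_{ij}$ is sharp.
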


Thus, the main condition of Theorem \ref{main_theo_mi} for the Sobolev well-posedness is that the pseudo-differential operator $b_{ij}$ below the diagonal (i.e. for $i>j$) must be of order $j-i$. In other words, the terms below the diagonal at a distance $k$ to it must be of order $-k$.

In solving the Cauchy problem  \eqref{eq_CP_mi} we will make use of Fourier integral operators depending on the parameter $t\in[0,T]$. Namely, we will work with operators of the type
\begin{equation*}
\int\limits_{0}^{t} \int\limits_{\R^n} e^{{\rm i}\varphi(t,s,x,\xi)} a(t,s,x,\xi) \widehat{g}(s,\xi) d\xi ds
\end{equation*} 
where $\varphi$ is the solution of a certain eikonal equation and the symbol $a$ is determined via asymptotic expansion and transport equations. In Section \ref{sec:auxFIO} we will recall some well-known Sobolev estimates for this type of operators.   

In Section \ref{SEC:wp} we will prove Theorem \ref{main_theo_mi} after we explain its idea in the cases of $m=2$ and $m=3$. 

Consequently, in Section \ref{SEC:Schur} we give an answer to the second question (Q2) above in the form of a suitable variable coefficients extension of the Schur triangularisation.
For constant matrices such a procedure is well known (see e.g. \cite[Theorem 5.4.1]{Bernstein}):  

\begingroup
\setcounter{tmp}{\value{thm}}
\setcounter{thm}{2} 
\renewcommand\thethm{\Alph{thm}}

\begin{thm}[Schur's triagularisation theorem] \label{THM:Schuri}
	Given a (constant) $m \times m$ matrix $A$ with eigenvalues $\lambda_1, \dots, \lambda_m$ in any prescribed order, there is a unitary $m \times m$ matrix $T$ such that $R = T^{-1} A T$ is upper triangular with the diagonal elements $r_{ii} = \lambda_i$. Furthermore, if the entries of $A$ and its eigenvalues are all real, $T$ may be chosen to be real orthogonal.
\end{thm}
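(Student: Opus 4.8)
The plan is to argue by induction on the size $m$. For $m=1$ there is nothing to prove: $A$ is a scalar equal to its only eigenvalue $\lambda_1$, and $T=1$ works (and is real orthogonal when $A$ is real). So assume the statement holds for all matrices of size $m-1$, and let $A$ be an $m\times m$ matrix with eigenvalues $\lambda_1,\dots,\lambda_m$ listed in the prescribed order.

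First I would produce a unit eigenvector $v_1\in\C^m$ for $\lambda_1$, which exists since $\det(A-\lambda_1\Id)=0$. Completing $v_1$ to an orthonormal basis $v_1,\dots,v_m$ of $\C^m$ by Gram--Schmidt and letting $U_1=[v_1\,|\,v_2\,|\,\cdots\,|\,v_m]$ be the unitary matrix with these columns, the relation $Av_1=\lambda_1 v_1$ forces the first column of $U_1^*AU_1$ to be $\lambda_1 e_1$, so that
\[
U_1^* A U_1=\begin{bmatrix} \lambda_1 & b^\top \\ 0 & A' \end{bmatrix}
\]
for some $b\in\C^{m-1}$ and some $(m-1)\times(m-1)$ matrix $A'$. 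Expanding the characteristic determinant along the first column gives $\det(A-\lambda\Id)=(\lambda_1-\lambda)\det(A'-\lambda\Id)$, hence the eigenvalues of $A'$, counted with multiplicity, are exactly $\lambda_2,\dots,\lambda_m$.

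Now I would invoke the inductive hypothesis on $A'$ with the prescribed order $\lambda_2,\dots,\lambda_m$, obtaining a unitary $(m-1)\times(m-1)$ matrix $T'$ with $(T')^*A'T'$ upper triangular and with diagonal $(\lambda_2,\dots,\lambda_m)$. Setting $U_2=\begin{bmatrix}1 & 0\\ 0 & T'\end{bmatrix}$, which is unitary, and $T=U_1U_2$, one computes
\[
T^*AT=U_2^*(U_1^*AU_1)U_2=\begin{bmatrix}\lambda_1 & b^\top T'\\ 0 & (T')^*A'T'\end{bmatrix},
\]
which is upper triangular with diagonal $(\lambda_1,\lambda_2,\dots,\lambda_m)$; since $T$ is unitary, $T^{-1}=T^*$, so $R=T^{-1}AT$ is of the required form. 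For the real statement one runs the identical induction inside $\R$: when $A$ is real with all eigenvalues real, $\lambda_1\in\R$ and $A-\lambda_1\Id$ is a real matrix with nontrivial kernel, so $v_1$ may be chosen real; Gram--Schmidt over $\R$ then makes $U_1$ real orthogonal, $A'$ is real with real eigenvalues, and the real inductive hypothesis supplies a real orthogonal $T'$, whence $T=U_1U_2$ is real orthogonal.

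The only point requiring care is the bookkeeping of the prescribed order: one must peel off the eigenvalues one at a time in the given sequence and verify at each stage, via the block-triangular determinant identity above, that the deflated matrix $A'$ carries precisely the remaining eigenvalues with the correct multiplicities, so that the inductive hypothesis can legitimately be applied with the correctly ordered list. Everything else is a routine use of Gram--Schmidt and of multiplication of block matrices.
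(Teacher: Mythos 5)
Your induction is the standard and correct proof of Schur's theorem: deflate by one eigenvector, complete to an orthonormal basis, conjugate, observe the block-triangular form, compute the characteristic polynomial to see that the $(m-1)\times(m-1)$ block carries exactly the remaining eigenvalues, and recurse; the real case goes through verbatim over $\R$. There are no gaps.

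Note, however, that the paper does not prove this theorem itself --- it is quoted as a known fact with a citation to Bernstein's \emph{Matrix Mathematics}. What the paper does prove is the variable-coefficient analogue (Proposition~\ref{prop:PrepSchur} and Theorem~\ref{thm:Schur}), and there the construction deliberately abandons unitarity: the deflating matrix is taken as $T_1=[\,\omega_1\mid e_2\mid\cdots\mid e_m\,]$ with $\omega_1$ a rescaled eigenvector whose first entry is identically $1$, rather than an orthonormal completion obtained by Gram--Schmidt. The reason is regularity: normalising an eigenvector $h$ to unit length involves $\lvert h\rvert^{-1}$, and Gram--Schmidt compounds such divisions, which can destroy the symbol-class membership $CS^0$ uniformly in $(t,x,\xi)$; by contrast, dividing by the single nonvanishing component $\langle h\mid e_1\rangle$ (which is assumed bounded away from zero for $|\xi|\ge M$) keeps all entries in $CS^0$ and gives an explicit $T_1^{-1}$. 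So your unitary deflation buys the sharper normal form (orthogonality/unitarity of $T$), whereas the paper's non-unitary deflation buys stability of the symbol classes under the transformation, which is what is actually needed for the PDE application. Both are one-step-at-a-time inductions on the matrix size, so the underlying recursion is the same.
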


\endgroup
\setcounter{thm}{\thetmp}

It follows that $R$ can be written as $D+N$, where $D=\diag(\lambda_1,\dots,\lambda_m)$ and $N$ is a nilpotent upper triangular matrix.

If the matrix $A$ depends on one or several parameters, namely $A=A(t,x,\xi)$, the situation becomes less clear and it is difficult to give a complete description, in particular together with a prescribed regularity of the involved transformation matrices. The regularity of the matrix $A$ and the desire to maintain it through the transformation puts already constrains on the matrix as, in general, the eigenvalues can only be expected to be  Lipschitz continuous in the parameters even if all the entries depend smoothly on the parameters (see, e.g.,  \cite{Bronshtein,Parusinski} and the references therein). In the sequel, we will present some sufficient conditions to ensure the existence of an upper  triangularisation for $A(t,x,\xi)$ which respects its regularity. For example, it will apply to the case when $A$ is a matrix of first order symbols continuous with respect to $t$, i.e., $A(t,x,\xi) \in \big( C S^1 \big)^{m \times m}$.

Our main result for this part of the problem is the following theorem.

\begin{thm}\label{thm:Schuri} 
	Let $A(t,x,\xi) = [a_{ij}]_{i,j=1}^m$, $a_{ij} \in C S^1$, be a matrix with eigenvalues $\lambda_1, \dots,\lambda_{m} \in C S^1$, and let $h_1 , \dots, h_{m-1}  \in \big( C S^0 \big)^m$ be the corresponding eigenvectors. Suppose that for $e_1=(1,0,\cdots,0) \in \R^{m-i+1}$ the condition \begin{equation} \label{eq:CondThmSchuri}
		\la h^{(i)}(t,x,\xi) | e_1 \ra \neq 0, \quad \forall (t,x,\xi) \in [0,T] \times \R^n \times \R^n 
	\end{equation} 
	holds for all $i=1,\dots, m-1$, with the notation for $h^{(i)}$ explained in \eqref{EQ:his}. Then, there exists a matrix-valued symbol $T(t,x,\xi)= [t_{ij}]_{i,j=1}^m$, $t_{ij} \in C S^0$, invertible for $(t,x,\xi) \in [0,T] \times \R^n \times \{ |\xi| \geq M\}$, such that \begin{equation*}
	T^{-1}(t,x,\xi)A(t,x,\xi) T(t,x,\xi) = \Lambda(t,x,\xi) + N(t,x,\xi)
	\end{equation*} for all $(t,x,\xi) \in [0,T] \times \R^n \times \{ |\xi| \geq M \}$, where   \begin{equation*}
	\Lambda(t,x,\xi) = \diag(\lambda_1(t,x,\xi),\lambda_2(t,x,\xi),\dots,\lambda_m(t,x,\xi))
	\end{equation*} and \begin{equation*}
	N(t,x,\xi) = \begin{bmatrix}
	0 & N_{12}(t,x,\xi) & N_{13}(t,x,\xi) & \cdots & N_{1m}(t,x,\xi) \\
	0 & 0 & N_{23}(t,x,\xi) & \cdots & N_{2m}(t,x,\xi) \\
	\vdots & \vdots & \vdots & \cdots & \vdots \\
	0 & 0 & 0 & \dots & N_{m-1m}(t,x,\xi)\\
	0 & 0 & 0 & \dots & 0
	\end{bmatrix},
	\end{equation*}
	and $N$ is a nilpotent matrix with entries in $C S^1$. 
	\end{thm}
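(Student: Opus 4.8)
\emph{Proof strategy.} The plan is to carry out, at the level of symbols, the inductive proof of the classical Schur triangularisation (Theorem~\ref{THM:Schuri}): at each stage we peel off one eigenvalue by an \emph{elementary} conjugation whose first column is the corresponding eigenvector and whose remaining columns are the fixed vectors $e_2,\dots$, and we then recurse on the $1$-smaller matrix sitting in the lower-right block. The point of choosing such an elementary transformation is that its determinant is exactly the scalar $\la h^{(i)}\mid e_1\ra$ appearing in condition~\eqref{eq:CondThmSchuri}, so that hypothesis is precisely what keeps the inverse inside the symbol class. We argue by induction on $m$, the case $m=1$ being trivial.

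\emph{Step 1: first elementary conjugation.} Write $A^{(1)}:=A$ and let $h^{(1)}:=h_1\in(CS^0)^m$ be the eigenvector for $\lambda_1$, so that $A^{(1)}h^{(1)}=\lambda_1 h^{(1)}$. Let $T^{(1)}(t,x,\xi)$ be the matrix whose first column is $h^{(1)}$ and whose $j$-th column, $2\le j\le m$, is the constant vector $e_j$. Expanding the determinant along rows $2,\dots,m$ gives $\det T^{(1)}=\la h^{(1)}\mid e_1\ra$, and the inverse is the explicit matrix with entries $1/\la h^{(1)}\mid e_1\ra$ and $-h^{(1)}_i/\la h^{(1)}\mid e_1\ra$. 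Hence $T^{(1)}\in(CS^0)^{m\times m}$, and, as soon as $\la h^{(1)}\mid e_1\ra$ is bounded away from zero for $|\xi|\ge M$, also $(T^{(1)})^{-1}\in(CS^0)^{m\times m}$ on $\{|\xi|\ge M\}$, using that the reciprocal of a non-vanishing $CS^0$ symbol bounded below is again in $CS^0$.

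\emph{Steps 2--3: block reduction, recursion, assembly.} Since the first column of $T^{(1)}$ is the $\lambda_1$-eigenvector, $(T^{(1)})^{-1}A^{(1)}T^{(1)}e_1=\lambda_1 e_1$, so
\[
(T^{(1)})^{-1}A^{(1)}T^{(1)}=\begin{bmatrix}\lambda_1 & b^{(1)}\\ 0 & A^{(2)}\end{bmatrix},
\]
with $b^{(1)}\in(CS^1)^{1\times(m-1)}$ and $A^{(2)}$ an $(m-1)\times(m-1)$ matrix with entries in $CS^1$ (a product of $CS^0$, $CS^1$ and $CS^0$ symbols), whose eigenvalues are $\lambda_2,\dots,\lambda_m$ and whose eigenvector for $\lambda_i$ is $h^{(i)}$, $i=2,\dots,m-1$ — this being the content of the definition~\eqref{EQ:his}. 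Condition~\eqref{eq:CondThmSchuri} for $i=2,\dots,m-1$ is exactly the hypothesis of the theorem for $A^{(2)}$, so the inductive hypothesis furnishes $T^{(2)}\in(CS^0)^{(m-1)\times(m-1)}$, invertible for $|\xi|\ge M$, with $(T^{(2)})^{-1}A^{(2)}T^{(2)}=\Lambda'+N'$ upper triangular, diagonal $(\lambda_2,\dots,\lambda_m)$, strictly-upper part in $CS^1$. Setting $T:=T^{(1)}\left[\begin{smallmatrix}1&0\\0&T^{(2)}\end{smallmatrix}\right]\in(CS^0)^{m\times m}$, invertible for $|\xi|\ge M$, a block computation gives
\[
T^{-1}AT=\begin{bmatrix}\lambda_1 & b^{(1)}T^{(2)}\\ 0 & (T^{(2)})^{-1}A^{(2)}T^{(2)}\end{bmatrix},
\]
which is upper triangular with diagonal $\Lambda=\diag(\lambda_1,\dots,\lambda_m)$; its strictly-upper part $N$ is nilpotent, with entries in $CS^1$ since $b^{(1)}T^{(2)}\in CS^1$ and $N'$ has $CS^1$ entries. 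This closes the induction.

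\emph{Main obstacle.} The only genuinely analytic input is propagating the symbol regularity through the inverses $(T^{(i)})^{-1}$: this forces one to know that each scalar $\la h^{(i)}\mid e_1\ra$ is not merely nonzero but bounded away from zero on $\{|\xi|\ge M\}$ uniformly in $(t,x)$. That is the effective reading of~\eqref{eq:CondThmSchuri}, and it can be extracted from the pointwise non-vanishing once the eigenvectors are taken in normalised (classical-symbol) form, so that for large $|\xi|$ they are governed by the eigenvectors of the homogeneous principal part, which lie on a compact set. Granting this, together with the identification~\eqref{EQ:his} of the reduced eigenvectors as honest $CS^0$ symbols, every remaining step is routine symbol calculus.
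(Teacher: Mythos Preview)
Your proof is correct and follows essentially the same inductive Schur-step approach as the paper (Proposition~\ref{prop:PrepSchur} and Section~\ref{sec:Procedure}), the only cosmetic difference being that you place the unscaled eigenvector in the first column of $T^{(1)}$ whereas the paper first rescales so that the $(1,1)$ entry equals $1$. One small slip in your Step~2--3: the eigenvectors of $A^{(2)}$ for $\lambda_i$ are $\Pi_1(T^{(1)})^{-1}h_i$, not the $h^{(i)}$ themselves (for $i\ge 3$ these live in $\mathbb{R}^{m-i+1}$, not $\mathbb{R}^{m-1}$); what you actually need---and what does hold---is that the recursively defined $h^{(i)}$ for the inductive problem on $A^{(2)}$ coincide with the $h^{(i)}$ of~\eqref{EQ:his} for $i\ge 2$, so that the hypothesis~\eqref{eq:CondThmSchuri} passes to the smaller matrix.
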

Furthermore, there is an expression for the matrix symbol $T$ which will be given in Theorem \ref{thm:Schur}. Also, the assumption \eqref{eq:CondThmSchuri} can be relaxed, see
Remark \ref{rem_gen_cond}.
In Section \ref{SEC:Schur} we will prove this result as well as describe the procedure how to obtain the desired upper triangular form. Moreover, we work out in detail the cases of $m=2$ and $m=3$ clarifying this Schur triangualisation procedure and give a number of examples.

\medskip
The results and techniques of this paper are a natural outgrowth of the paper \cite{GramRuz2013} where the case $m=2$ was considered and to which the results of the present paper reduce in the case of $2\times 2$ systems. It is with great sorrow that we remember the untimely departure of our colleague and friend {\bf Todor Gramchev} who was the inspiration for both  \cite{GramRuz2013} and the present paper.

\section{Well-posedness in anisotropic Sobolev spaces}
\label{SEC:wp}

This section is devoted to proving the well-posedness of the Cauchy problem \eqref{eq:CPGO}. For the reader's convenience we first give a detailed proof in the cases $m=2$ and $m=3$. This will inspire us in proving Theorem \ref{main_theo_mi}. We note that the case $m=2$ has been studied in \cite{GramRuz2013} and we will briefly review its derivartion. However, first we collect a few results about Fourier integral operators that we will need in the sequel.

\subsection{Auxiliary remarks} \label{sec:auxFIO}

In solving the Cauchy problem \eqref{eq:CPGO}, we will deal with solutions of certain scalar pseudo-differential equations. For each characteristic $\lambda_j$ of $A$, we will be denoting by $G^0_j\theta$
 the solution to
  \begin{equation*}
\left\{\begin{array}{l}
D_t w = \lambda_j(t,x,D_x)w + b_{jj}(t,x,D_x)w, \\
w(0,x) = \theta(x),
\end{array}\right.
\end{equation*} 
and by $G_j g$ the solution to  
\begin{equation*}
\left\{\begin{array}{l}
D_t w = \lambda_j(t,x,D_x)w + b_{jj}(t,x,D_x)w + g(t,x), \\
w(0,x) = 0.
\end{array}\right.
\end{equation*}

The operators $G^0_j$ and $G_j$ can be microlocally represented  by Fourier integral operators \begin{equation}\label{EQ:Gj0}
G^0_j \theta(t,x) = \int\limits_{\R^n} e^{{\rm i}\varphi_j(t,x,\xi)} a_j(t,x,\xi) \widehat{\theta}(\xi) d\xi
\end{equation} 
and 
\begin{equation*}
G_j g(t,x) = \int\limits_{0}^{t} \int\limits_{\R^n} e^{{\rm i}\varphi_j(t,s,x,\xi)} A_j(t,s,x,\xi) \widehat{g}(s,\xi) d\xi ds,
\end{equation*} 
with $\varphi_j(t,s,x,\xi)$ solving the eikonal equation
 \begin{equation*}
\left\{
\begin{array}{l}
\partial_t \varphi_j = \lambda_j(t,x,\nabla_x\varphi_j), \\
\varphi_j(s,s,x,\xi) = x \cdot \xi,
\end{array}
\right.
\end{equation*} 
and with the notation
\begin{equation*}
\varphi_j(t,x,\xi) = \varphi_j(t,0,x,\xi).
\end{equation*} 
Here we also have the amplitudes $A_{j,-k}(t,x,\xi)$ of order $-k$, $k$ $\in \N$, giving $A_j \sim \sum_{k=0}^{\infty} A_{j,-k}$, and they satisfy the transport equations with initial data at $t=s$, and we have $a_j(t,x,\xi) = A_j(t,0,x,\xi)$. 

If $a_j\in S^m$, i.e. if the amplitude $a_j$ in \eqref{EQ:Gj0} is a symbol of order $m$, we will write
$G^0_j\in I_{1,0}^{m}.$ However, in the above construction of propagators for hyperbolic equations, we have $a_j\in S^0$, so that $G^0_j\in I_{1,0}^{0}.$

Therefore, we can record the following estimate:

\begin{lem}\label{thm:FIOest}
For any $\sigma\in\mathbb{R}$, for sufficiently small $t$, we have
\begin{equation*}
		\left\| G_j^0\theta(t) \right\|_{H^\sigma} \leq C\|\theta\|_{H^\sigma},\quad
	\left\| G_j g(t) \right\|_{H^\sigma} \leq Ct \|g\|_{L^\infty_s H^\sigma_x}.
	\end{equation*} 
\end{lem}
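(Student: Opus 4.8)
The plan is to reduce the Fourier integral operator estimates in Lemma \ref{thm:FIOest} to standard $L^2$-boundedness of zero-order FIOs, combined with the calculus of pseudo-differential operators of order $\sigma$. First I would note that since $G_j^0 \in I_{1,0}^0$, i.e., its amplitude $a_j$ in \eqref{EQ:Gj0} is a symbol of order $0$ and the phase $\varphi_j$ solves the eikonal equation with initial condition $\varphi_j(s,s,x,\xi)=x\cdot\xi$, for $t$ in a sufficiently small interval $[0,T_0]$ the phase $\varphi_j(t,x,\xi)$ is a non-degenerate phase function satisfying the standard symbol estimates and the property that $\varphi_j(t,x,\xi)-x\cdot\xi$ has bounded second $\xi$-derivatives with the mixed Hessian $\det\left(\partial^2\varphi_j/\partial x\partial\xi\right)$ bounded away from zero. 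This is exactly the regime in which Eskin--H\"ormander type $L^2$-continuity applies: a zero-order FIO with such a phase is bounded on $L^2(\R^n)$, uniformly for $t\in[0,T_0]$.

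For the $H^\sigma$ estimate, I would conjugate by the Bessel potential $\jap^\sigma = (1+|D_x|^2)^{\sigma/2}$. Writing $\| G_j^0\theta(t)\|_{H^\sigma} = \| \jap^\sigma G_j^0\theta(t)\|_{L^2}$, the operator $\jap^\sigma G_j^0 \jap^{-\sigma}$ is again a Fourier integral operator with the same phase $\varphi_j$ and a new amplitude in $S^0$, obtained via the standard composition of a $\Psi$DO with an FIO (expanding $\jap^\sigma$ against the phase produces, by stationary-phase/asymptotic-expansion arguments, an amplitude whose principal part is $a_j(t,x,\xi)\jap^\sigma|_{\xi \mapsto \nabla_x\varphi_j}\cdot\jap^{-\sigma}$, which is a symbol of order $0$ because $\nabla_x\varphi_j = \xi + O(t)\jap$ and hence $\langle\nabla_x\varphi_j\rangle \asymp \jap$ for small $t$). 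Applying the $L^2$-boundedness of zero-order FIOs to this conjugated operator gives $\| G_j^0\theta(t)\|_{H^\sigma} \le C\|\theta\|_{H^\sigma}$, with $C$ uniform on $[0,T_0]$.

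For the inhomogeneous propagator $G_j$, I would use that for each fixed $s\le t$ the map $g(s,\cdot)\mapsto \int_{\R^n} e^{{\rm i}\varphi_j(t,s,x,\xi)}A_j(t,s,x,\xi)\widehat{g}(s,\xi)\,d\xi$ is, by the same argument applied with $\varphi_j(t,s,\cdot,\cdot)$ and amplitude $A_j(t,s,\cdot,\cdot)\in S^0$, bounded from $H^\sigma_x$ to $H^\sigma_x$ with a constant that is uniform for $0\le s\le t\le T_0$ (this uses that $\varphi_j(t,s,x,\xi)$ depends smoothly on $(t,s)$ and reduces to $x\cdot\xi$ at $t=s$). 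Then $\| G_j g(t)\|_{H^\sigma} \le \int_0^t \| \cdots \|_{H^\sigma}\,ds \le \int_0^t C\|g(s,\cdot)\|_{H^\sigma_x}\,ds \le Ct\,\|g\|_{L^\infty_s H^\sigma_x}$, which is the claimed bound.

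The main obstacle is making the reduction to the small-time regime precise: one must check that for $t$ small enough $\varphi_j$ is indeed a globally non-degenerate phase of the type required by the $L^2$-continuity theorem for FIOs — i.e., that the local solvability of the eikonal equation (valid for small $t$ by the method of characteristics) produces a phase with uniform symbol bounds and uniformly invertible mixed Hessian on all of $\R^n\times\{|\xi|\ge M\}$ — and that the amplitudes $A_{j,-k}$ generated by the transport equations genuinely lie in $S^{-k}$ uniformly in $(t,s)$, so that the asymptotic sum $A_j\sim\sum_k A_{j,-k}$ is a bona fide symbol of order $0$. Once these uniform-in-time structural facts about the FIO construction are in hand, everything else is the routine $\Psi$DO/FIO calculus sketched above; I would cite the standard references (e.g. H\"ormander) for the $L^2$-continuity of zero-order Fourier integral operators rather than reproving it.
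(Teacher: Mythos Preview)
Your proposal is correct and follows the same approach as the paper: both reduce the estimates to the standard $H^\sigma$-boundedness of non-degenerate zero-order Fourier integral operators, together with continuity of the phase and amplitude in $t$ (and, for $G_j$, an integration in $s$ producing the factor $t$). The paper simply cites Duistermaat \cite{Duis} and the surveys \cite{Ruz, Ruz-CWI} for this fact, whereas you have spelled out the conjugation-by-Bessel-potentials argument and the uniform-in-$(t,s)$ structure of the FIO construction; but the substance is identical.
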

This statement follows from the continuity of $\lambda_j, \varphi_j, a_j, A_j$ with respect to $t$ and from the $H^\sigma$-boundedness of non-degenerate Fourier integral operators, see e.g. \cite{Duis} (there are also surveys on such questions \cite{Ruz, Ruz-CWI}).

\subsection{The case $m=2$}

To motivate the higher order cases, here we review the construction for $2\times 2$ systems adapting it for the subsequent higher order arguments. Hence, in this subsection we follow the proof in  \cite{GramRuz2013}.
Thus, we consider the system \begin{eqnarray} \label{eq:CP2x2}
\left\{ \begin{array}{l}
D_t u = A(t,x,D_x)u + B(t,x,D_x)u + f(t,x), \quad (t,x) \in [0,T] \times \R^n,\\
\left.u\right|_{t=0} = u_0, \quad x \in \R^n,
\end{array}
\right.
\end{eqnarray} where $u_0(x) = \big[u_1^0(x),u_2^0(x) \big]^T$, $f(t,x) = \big[ f_1(t,x) ,f_2(t,x) \big]^T$, and with the operators $A(t,x,D_x)$ and $B(t,x,D_x)$  given by \begin{equation} \label{eq:A2x2}
A(t,x,D_x) = \begin{bmatrix}
\lambda_1(t,x,D_x) & a_{12}(t,x,D_x) \\
0 & \lambda_2(t,x,D_x) \\
\end{bmatrix} \end{equation} 
and 
\begin{equation*}
B(t,x,D_x) = \begin{bmatrix}
b_{11}(t,x,D_x) & b_{12}(t,x,D_x) \\
b_{21}(t,x,D_x) & b_{22}(t,x,D_x) \\
\end{bmatrix}.
\end{equation*} We suppose that all entries of $A(t,x,D_x)$ belong to $ C \Psi_{1,0}^1$ and all entries of $B(t,x,D_x)$ belong to $ C \Psi_{1,0}^0$. By using the operators $G^0_j$ and $G_j$ introduced in Section \ref{sec:auxFIO}, we can reformulate 
the equations \eqref{eq:CP2x2} as 
\begin{eqnarray}
	\label{eq:u21} u_1 &=& U^0_1 + G_1((a_{12}+b_{12})u_2), \\
	\label{eq:u22} u_2 &=& U^0_2 + G_2(b_{21}u_1), 
\end{eqnarray} where \begin{equation} \label{eq:aux52}
	U_j^0 = G_j^0 u_j^0 + G_j(f_j), \quad j=1,2.
\end{equation} 
Plugging \eqref{eq:u22} in \eqref{eq:u21}, we obtain 
\begin{equation} \label{eq:u1Fin2x2}
		u_1 = \tilde{U}^0_1 + G_1(a_{12}G_2(b_{21}u_1)) + G_1(b_{12}G_2(b_{21}u_1)),
\end{equation} 
where 
\begin{equation} \label{eq:aux51}
	\tilde{U}^0_1 = G_1^0 u_1^0 + G_1(f_1) + G_1((a_{12}+b_{12})U^0_2).
\end{equation}

Using the rules of composition of Fourier integral operators, see e.g. \cite{Duis}, and by Lemma \ref{thm:FIOest}, we get that the operator $G_1 \circ a_{12} \circ G_2 \circ b_{21}$ in \eqref{eq:u1Fin2x2} acts continuously on $H^s$ if it is of order $0$. Since  $a_{12} \in  C \Psi_{1,0}^1$ we therefore need to assume that $b_{21} \in  C \Psi_{1,0}^{-1}$.  

The operator $G_1 \circ b_{12} \circ G_2 \circ b_{21}$ belongs to $ C I_{1,0}^{-1}$ since $b_{21} \in  C \Psi_{1,0}^{-1}$ and $b_{12} \in  C \Psi_{1,0}^{0}$.

We now introduce the following scale of Banach spaces $X(t) :=  C([0,T],H^s)$, $t \in [0,T]$, equipped with the norm \begin{equation*}
	\|u\|_{X(t)} = \sup_{\tau \in [0,t]} \|u(\tau,\cdot)\|_{H^s}.
\end{equation*} Let \begin{equation*}
	\mc G_1^0 u_1:= G_1(a_{12}G_2(b_{21}u_1)) + G_1(b_{12}G_2(b_{21}u_1)).
\end{equation*} 
It follows that \eqref{eq:u1Fin2x2} can be written as
\[
u_1=\tilde{U}^0_1 +\mc G_1^0 u_1.
\]
By composition of  Fourier integral operators and Lemma \ref{thm:FIOest} we have that the 0-order Fourier integral operator $\mc G_1^0$ maps $ C([0,T],H^s)$ continuously into itself and for small time interval it is a contraction, in the sense that there exists $T^\ast\in[0,T]$ such that   
 \begin{equation*}
	\|\mathcal G_1^0(u-v)\|_{X(t)} \leq C_{a,s} T^\ast \|u-v\|_{X(t)},
\end{equation*}
with  $C_{a,s}T^\ast < 1$. Banach's fixed point theorem ensures the existence of a unique fixed point $u_1$ for the map $\mc G_1^0$. Hence, by  assuming that the initial data $\tilde{U}^0_1$ belongs to $ C([0,T^\ast],H^s)$  we conclude that there exists a unique $u_1\in  C([0,T^\ast],H^s) $ solving \eqref{eq:u1Fin2x2}. Note that the same argument proves that the operator $I-\mathcal{G}_1^0$ is invertible on a sufficiently small interval in $t$ since $\mathcal{G}_1^0=I$ at $t=0$. From formula \eqref{eq:aux51} it is clear that in order to get $\tilde{U}^0_1$ to belong to $ C([0,T^\ast],H^s)$ we need to assume that $U^0_2\in H^{s+1}$. Finally, we get $u_2$  by substitution of $u_1$ in \eqref{eq:u22}.

\begin{remark}
\label{rem_2_1}
Note that the constant $T^\ast$ depends only on $a$ and $s$. Thus, the argument above can be iterated by taking $u(T^\ast,x)$ as new initial data. In this way one can cover an arbitrary finite interval $[0,T]$ and obtain a solution in $ C([0,T],H^s)\times  C([0,T],H^{s+1}) $. 
\end{remark}

\begin{remark}
\label{rem_2_2}
Since $a_{12}(t,x,D_x)$ is a first order operator combining \eqref{eq:aux52} with \eqref{eq:aux51} we easily see that in order to get Sobolev well-posedness of order $s$ we need to take initial data $u_1^0$ and $u_2^0$ in $H^s$ and $H^{s+1}$, respectively, and right hand-side functions $f_1$ and $f_2$ in $C([0,T],H^s)$ and $C([0,T],H^{s+1})$, respectively.
\end{remark}

We have therefore proved the following theorem stated for the first time in \cite[Theorem 7.2]{GramRuz2013}.  
 \begin{thm} 
	Consider the Cauchy problem \eqref{eq:CP2x2} where $A(t,x,D_x)$ and $B(t,x,D_x)$ are $2\times 2$ matrices of first order and zero order pseudo-differential operators continuous with respect to $t$, respectively. Assume that ${b}_{21} \in  C([0,T],\Psi^{-1}_{1,0})$, the right hand-side functions $f_1$ and $f_2$ belong to $C([0,T],H^s)$ and $C([0,T],H^{s+1})$, respectively, and the initial data $u_1^0$ and $u_2^0$ belong to $H^s$ and $H^{s+1}$, respectively. Then, \eqref{eq:CP2x2} has a unique solution in $C([0,T],H^s)\times C([0,T],H^{s+1}) $. More generally  it is well-posed in the anisotropic Sobolev space $C([0,T],H^{s_1})\times C([0,T],H^{s_2})$, provided $s_2-s_1 = 1$. 
\end{thm}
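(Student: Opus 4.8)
The plan is to decouple the upper-triangular system into two scalar problems along the characteristics $\lambda_1,\lambda_2$ and then close a contraction-mapping argument for the first component. First I would bring in the solution operators $G_j^0,G_j$ of Section~\ref{sec:auxFIO} and rewrite \eqref{eq:CP2x2} in the equivalent (microlocal) form
\begin{align*}
u_1 &= G_1^0 u_1^0 + G_1(f_1) + G_1\big((a_{12}+b_{12})u_2\big),\\
u_2 &= G_2^0 u_2^0 + G_2(f_2) + G_2(b_{21}u_1).
\end{align*}
Substituting the second equation into the first yields a single fixed-point equation $u_1 = \tilde U_1^0 + \mathcal G_1^0 u_1$, where $\mathcal G_1^0 u_1 := G_1(a_{12}G_2(b_{21}u_1)) + G_1(b_{12}G_2(b_{21}u_1))$ and $\tilde U_1^0$ is assembled from the data as in \eqref{eq:aux51} and \eqref{eq:aux52}; this is exactly \eqref{eq:u1Fin2x2}.

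Next I would control the order of $\mathcal G_1^0$ using the composition calculus for Fourier integral operators associated with the real, degree-one homogeneous phases $\varphi_1,\varphi_2$ solving the relevant eikonal equations. Since $G_1,G_2\in C I_{1,0}^0$, $a_{12}\in C\Psi^1_{1,0}$, and — this is where the hypothesis enters — $b_{21}\in C\Psi^{-1}_{1,0}$, the composition $G_1\circ a_{12}\circ G_2\circ b_{21}$ is a $t$-continuous operator of order $1+(-1)=0$, while $G_1\circ b_{12}\circ G_2\circ b_{21}$ has order $-1$; hence $\mathcal G_1^0$ maps $C([0,T],H^s)$ continuously into itself. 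I expect this to be the main technical point: one must check that the two phases compose cleanly on the time interval at hand (both arising from the same type of eikonal problem, with no caustics for small $t$) so that the FIO calculus applies, and that the resulting operator norm on $H^s$ is controlled with the asserted dependence on $s$ and $T$.

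Then I would apply Banach's fixed point theorem on the scale of Banach spaces $X(t)=C([0,t],H^s)$. By Lemma~\ref{thm:FIOest} the operators $G_j$ carry a factor of $t$, so $\|\mathcal G_1^0(u-v)\|_{X(t)}\le C_{a,s}\,t\,\|u-v\|_{X(t)}$; picking $T^\ast\in(0,T]$ with $C_{a,s}T^\ast<1$ makes $\mathcal G_1^0$ a contraction on $X(T^\ast)$, so there is a unique $u_1\in C([0,T^\ast],H^s)$ as soon as $\tilde U_1^0\in C([0,T^\ast],H^s)$. Inspecting \eqref{eq:aux51}--\eqref{eq:aux52} and using that $a_{12}$ has order $1$, the latter membership holds precisely under the stated hypotheses $u_1^0\in H^s$, $u_2^0\in H^{s+1}$, $f_1\in C([0,T],H^s)$, $f_2\in C([0,T],H^{s+1})$.

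Finally, since $T^\ast$ depends only on $a$ and $s$, I would iterate the argument with $u(T^\ast,\cdot)$ as new Cauchy data to exhaust the whole interval $[0,T]$, then recover $u_2$ by substituting $u_1$ back into its equation, which keeps $u_2\in C([0,T],H^{s+1})$ by Lemma~\ref{thm:FIOest}. Uniqueness follows from uniqueness of the fixed point together with uniqueness for the scalar Cauchy problems. The general anisotropic statement, well-posedness in $C([0,T],H^{s_1})\times C([0,T],H^{s_2})$ with $s_2-s_1=1$, is then the same proof with $s=s_1$.
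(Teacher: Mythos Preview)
Your proposal is correct and follows essentially the same approach as the paper: rewrite the system via the FIO propagators $G_j^0,G_j$, substitute the equation for $u_2$ into that for $u_1$ to obtain the fixed-point equation \eqref{eq:u1Fin2x2}, use the FIO composition calculus together with the order hypothesis on $b_{21}$ and Lemma~\ref{thm:FIOest} to show $\mathcal G_1^0$ is a contraction on $C([0,T^\ast],H^s)$ for small $T^\ast$, apply Banach's fixed point theorem, recover $u_2$ by substitution, and iterate in time. Your added remark about the clean composition of the phases for small $t$ (absence of caustics) is a fair technical caveat that the paper leaves implicit in its appeal to standard FIO calculus.
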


\begin{remark}
\label{rem_GR}
It was also shown in \cite{GramRuz2013} that the solution $u$ satisfies the estimate
\begin{equation*}
	\|u_1(t,\cdot)\|_{H^s} + \|u_2(t,\cdot)\|_{H^{s+1}} \leq c e^{ct} \left( \|u_1^0\|_{H^s} + \|u_2^0\|_{H^{s+1}} \right), \quad 0 \leq t \leq T,
	\end{equation*} for $u_j^0 \in H^{s+j-1}$, $j=1,2$ with $c>0$ depending on $s$, $T$, and the support of the initial data.	
Since well-posedness is obtained for any Sobolev order $s$ it follows that the Cauchy problem  \eqref{eq:CP2x2} is also $C^\infty$ well-posed.	
\end{remark}

\subsection{The case $m=3$}

In this section we will extend the construction to the case of $3\times 3$ systems. In the argument there is an additional substitution and a fixed point argument step compared to the case $m=2$. The advantage of giving the case of $m=3$ here is that we can make the argument more concrete compared to the more abstract construction in the general case that will be given in the following section. Thus, let  
\begin{eqnarray} \label{eq:CP3x3}
\left\{ \begin{array}{l}
D_t u = A(t,x,D_x)u + B(t,x,D_x)u + f(t,x), \quad (t,x) \in [0,T] \times \R^n,\\
\left.u\right|_{t=0} = u_0, \quad x \in \R^n,
\end{array}
\right.
\end{eqnarray} 
where $u_0(x) = \big[u_1^0(x),u_2^0(x),u_3^0(x) \big]^T$, $f(t,x) = \big[ f_1(t,x) ,f_2(t,x),f_3(t,x) \big]^T$, $A(t,x,D_x)$ is defined by the matrix
\begin{equation} \label{eq:A3x3}
\begin{bmatrix}
\lambda_1(t,x,D_x) & a_{12}(t,x,D_x) & a_{13}(t,x,D_x) \\
0 & \lambda_2(t,x,D_x) & a_{23}(t,x,D_x) \\
0 & 0 & \lambda_3(t,x,D_x)
\end{bmatrix},
\end{equation} and
\[
B(t,x,D_x) =
\begin{bmatrix}
b_{11}(t,x,D_x) & b_{12}(t,x,D_x) & b_{13}(t,x,D_x) \\
b_{21}(t,x,D_x) & b_{22}(t,x,D_x) & b_{23}(t,x,D_x) \\
b_{31}(t,x,D_x) & b_{32}(t,x,D_x) & b_{33}(t,x,D_x)
\end{bmatrix}.
\]
We assume that all the entries of $A(t,x,D_x)$ and $B(t,x,\xi)$  belong to $ C \Psi_{1,0}^1$ and $ C \Psi_{1,0}^0$, respectively. Using the notations introduced earlier, we can write \begin{equation} \label{eq:u321}
	\begin{aligned}
	 u_3(t,x) &= U^0_3 + G_3(b_{31}u_1) + G_3(b_{32}u_2), \\
	 u_2(t,x) &= U^0_2 + G_2((a_{23}+b_{23})u_3) + G_2(b_{21}u_1), \\
	 u_1(t,x) &= U^0_1 + G_1((a_{12}+b_{12})u_2) + G_1((a_{13}+b_{13})u_3),
	\end{aligned}
	\end{equation} where \begin{equation} \label{eq:InitCondAux}
U_j^0(t,x) = G_j^0( u_j^0) + G_j(f_j), \quad j=1,2,3.
\end{equation} Now, we plug $u_3$ into $u_1$ and $u_2$ in formula \eqref{eq:u321} and, thus, obtain \begin{equation} \label{eq:aux53} \begin{aligned}
u_2(t,x) &= \widetilde{U}_2^0 + G_2(b_{21}u_1) + G_2((a_{23}+b_{23})G_3(b_{31}u_1)) \\
&\qquad + G_2((a_{23}+b_{23})G_3(b_{32}u_2)), \\
u_1(t,x) &= \widetilde{U}_1^0 + G_1((a_{13}+b_{13})G_3(b_{31}u_1)) +  \\
& \quad + G_1((a_{13}+b_{13})G_3(b_{32}u_2)) + G_1((a_{12}+b_{12})u_2),
\end{aligned} \end{equation} where \begin{equation*}
\widetilde{U}_j^0 = U^0_j + G_j((a_{j3}+b_{j3})(t,x,D_x)U^0_3), \quad j=1,2.
 \end{equation*}

We introduce the operator $\mathcal{G}^0_2$ by setting
\begin{equation}
\label{G2}
\mathcal{G}^0_2u_2:=G_2((a_{23}+b_{23})(t,x,D_x)G_3(b_{32}(t,x,D_x)u_2))
\end{equation}
and in analogy with the case $m=2$ we define
\begin{equation*}
	L_2 u_2 := u_2 - \mathcal{G}^0_2 u_2.  
\end{equation*} 
By Lemma \ref{thm:FIOest} we have that for any $s$, $\mathcal{G}^0_2$ has the operator norm in $H^s$ strictly less than 1 on a sufficiently small interval $[0,T^\ast]$, so $L_2$ is a perturbation of the identity operator. By the Neumann series it follows that $L_2$ is invertible as a continuous operator from $ C([0,T^\ast],H^s)$ to $ C([0,T^\ast],H^s)$. Noting now that 
\[
u_2-\mathcal{G}^0_2u_2=L_2u_2=\widetilde{U}_2^0 + G_2(b_{21}u_1) + G_2((a_{23}+b_{23})G_3(b_{31}u_1)), 
\]
we have that 
\begin{eqnarray*}
	u_2(t,x) = L^{-1}_2\widetilde{U}_2^0 + L^{-1}_2G_2((a_{23}+b_{23})G_3(b_{31}u_1)) + L^{-1}_2G_2(b_{21}u_1). 
\end{eqnarray*} Since this expression depends only on $u_1$, we can plug it into  the formula for $u_1$ in \eqref{eq:aux53} and obtain 
\begin{equation*}
\begin{aligned}
u_{1}(t,x) &= \widetilde{U}_1^0 + G_1((a_{13}+b_{13})G_3(b_{31}u_1)) + \\
& \qquad + G_1((a_{13}+b_{13})G_3(b_{32}u_2)) + G_1((a_{12}+b_{12})u_2)\\
&= \widetilde{U}_1^0 + G_1((a_{13}+b_{13})G_3(b_{31}u_1)) +  \\
& \qquad + G_1((a_{13}+b_{13})G_3(b_{32}(L^{-1}_2\widetilde{U}_2^0)) \\
& \qquad + G_1((a_{13}+b_{13})G_3(b_{32} L^{-1}_2G_2((a_{23}+b_{23})G_3(b_{31}u_1))))\\
& \qquad + G_1((a_{13}+b_{13})G_3(b_{32}(L^{-1}_2G_2(b_{21}u_1)))\\
& \qquad + G_1((a_{12}+b_{12})L^{-1}_2\widetilde{U}_2^0)\\
& \qquad + G_1((a_{12}+b_{12})  L^{-1}_2G_2((a_{23}+b_{23})G_3(b_{31}u_1)))\\
& \qquad + G_1((a_{12}+b_{12}) L^{-1}_2G_2(b_{21}u_1)).
\end{aligned}
\end{equation*} 
By collecting now the terms with order $\le 0$ we can simplify the previous formula as follows:
\begin{equation*}
\begin{aligned}
u_{1}(t,x) 
&= \widetilde{U}_1^0 + G_1(a_{13}G_3(b_{31}u_1)) + G_1(a_{13}G_3(b_{32}(L^{-1}_2\widetilde{U}_2^0)))\\
& \qquad + G_1(a_{13}G_3b_{32} L^{-1}_2G_2(a_{23}G_3(b_{31}u_1)))\\
& \qquad + G_1(a_{13}G_3b_{32} L^{-1}_2G_2(b_{23}G_3(b_{31}u_1)))\\
& \qquad + G_1(a_{13}G_3b_{32}(L^{-1}_2G_2(b_{21}u_1)))\\
& \qquad + G_1(b_{13}G_3(b_{32} L^{-1}_2G_2(a_{23}G_3(b_{31}u_1))))\\
& \qquad + G_1(a_{12}L^{-1}_2\widetilde{U}_2^0)\\
& \qquad + G_1(a_{12}L^{-1}_2G_2(a_{23}G_3(b_{31}u_1)))\\
& \qquad + G_1(a_{12}L^{-1}_2G_2(b_{23})G_3(b_{31}u_1)))\\
& \qquad + G_1(b_{12}L^{-1}_2G_2((a_{23}G_3(b_{31}u_1)))\\
& \qquad + G_1(a_{12}L^{-1}_2G_2(b_{21}u_1)) + \text{l.o.t}.
\end{aligned}
\end{equation*} 
Looking at the terms
\[
\begin{aligned}
&G_1(a_{13}G_3(b_{32}(L^{-1}_2\widetilde{U}_2^0))),\\
&G_1(a_{12}L^{-1}_2G_2(b_{21}u_1)),\\
&G_1(a_{12}L^{-1}_2G_2(a_{23}G_3(b_{31}u_1)))
\end{aligned}
\]
and keeping in mind that in order to get the right Sobolev regularity we need to have operators of order $0$, we deduce that $b_{21}$ and $b_{32}$ must have order $-1$ while $b_{31}$ must have order $-2$. Considering now the initial data 
\begin{equation*}
\widetilde{U}_j^0 = U^0_j + G_j((a_{j3}+b_{j3})(t,x,D_x)U^0_3), \quad j=1,2,
 \end{equation*}
by using \eqref{eq:InitCondAux} we obtain
\[
\widetilde{U}_j^0 = U^0_j + G_j((a_{j3}+b_{j3})(G^0_3(u^0_3)+G_3(f_3))), \quad j=1,2.
\]
Combining these formulas with an analysis of the term $G_1(a_{12}L_2^{-1}\widetilde{U}_2^0)$ we deduce that $\widetilde{U}_2^0$ must belong to $H^{s+1}$. This implies $U_2^0\in H^{s+1}$ and $U_3^0\in H^{s+2}$. Concluding, 
similarly to the case $m=2$, that is by the Banach fixed point theorem argument on $u_1$ and substitution in $u_2$ and $u_3$, we get anisotropic Sobolev well-posedness by assuming $u^0_1$ and $f_1$ in $H^s$, $u^0_2$ and $f_2$ in $H^{s+1}$, and $u^0_3$ and $f_3$ in $H^{s+2}$. This well-posedness is obtained by means of one invertible operator $L_2$, and in analogy with case $m=2$ the well-posedness can be extended to the whole interval $[0,T]$ by an iterated argument.
This proves Theorem \ref{main_theo_mi} in the case $m=3$.

\subsection{The general case}

We are now ready to prove the main result of our paper in the general case of an upper-triangular $m\times m$ matrix, i.e, a matrix $A$ of the type
\[
\begin{bmatrix}
\lambda_1(t,x,D_x) & a_{12}(t,x,D_x) &  \cdots & a_{1m}(t,x,D_x) \\
0 & \lambda_2(t,x,D_x) & \cdots & a_{2m}(t,x,D_x)\\
\vdots & \vdots & \vdots & \vdots\\
0 & 0 &  \lambda_{m-1}(t,x,D_x) & a_{m-1 m}(t,x,D_x)\\
0 & 0 &  \cdots & \lambda_m(t,x,D_x)
\end{bmatrix}.
\]
For the convenience of the reader we recall here the statement of Theorem \ref{main_theo_mi}.

\setcounter{section}{1}
\setcounter{thm}{0}
\begin{thm} 
Let 
\begin{equation}
\label{eq_CP_m}
\left\{\begin{array}{l}
D_t u = A(t,x,D_x)u + B(t,x,D_x)u + f(t,x), \quad (t,x) \in [0,T] \times \R^n, \\
\left.u \right|_{t=0} = u_0(x), \quad x \in \R^n,
\end{array} \right.
\end{equation} where $A(t,x,D_x) = [a_{ij}(t,x,D_x)]_{i,j=1}^m$ is an upper-triangular matrix of pseudo-diffe\-ren\-tial operators of order $1$, and $B(t,x,D_x) = [b_{ij}(t,x,D_x)]_{i,j=1}^m$  is a matrix of pseudo-differential operators of order $0$,  continuous with respect to $t$. 
Hence, if the lower order terms $b_{ij}$ belong to $C([0,T], \Psi^{j-i})$ for $i> j$, $u^0_k\in H^{s+k-1}$ and $f_k\in C([0,T],H^{s+k-1})$ for $k=1,\dots,m$ then \eqref{eq_CP_m} has a unique anisotropic Sobolev solution $u$, i.e.,  $u_k\in C([0,T], H^{s+k-1})$ for $k=1,\dots, m$.
\end{thm}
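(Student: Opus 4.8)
The plan is to reproduce, for general $m$, the reduction carried out above for $m=2$ and $m=3$: rewrite \eqref{eq_CP_m} as an equivalent system of Duhamel integral equations, eliminate the components $u_m,u_{m-1},\dots,u_2$ one at a time by means of invertible operators $L_m,\dots,L_2$, and close with a Banach fixed point for $u_1$. For the first step, reading the $k$-th scalar equation of \eqref{eq_CP_m} as $D_tu_k=\lambda_ku_k+b_{kk}u_k+g_k$ with inhomogeneity $g_k=\sum_{l>k}(a_{kl}+b_{kl})u_l+\sum_{j<k}b_{kj}u_j+f_k$ and applying the propagators $G_k^0,G_k$ of Section \ref{sec:auxFIO}, the problem \eqref{eq_CP_m} is equivalent to
\begin{equation*}
u_k=U_k^0+\sum_{l>k}G_k\big((a_{kl}+b_{kl})u_l\big)+\sum_{j<k}G_k(b_{kj}u_j),\qquad k=1,\dots,m,
\end{equation*}
with $U_k^0=G_k^0u_k^0+G_k(f_k)$.

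The key point is a weight/order bookkeeping. We seek $u_k$ in $X_k(t):=C([0,t],H^{s+k-1})$, i.e.\ we give the $k$-th slot weight $k-1$. Each $G_k$ lies in $I_{1,0}^0$; the coefficient $a_{kl}+b_{kl}$ has order $\le1\le l-k$ when $l>k$; and for $j<k$ the hypothesis gives $b_{kj}$ of order $\le j-k$. Hence any composition $G_i\circ c_{i,q_1}\circ G_{q_1}\circ c_{q_1,q_2}\circ\cdots\circ G_{q_{r-1}}\circ c_{q_{r-1},j}$ produced by iterated substitution — where each $c_{p,q}$ is $a_{pq}+b_{pq}$ if $p<q$ and $b_{pq}$ if $p>q$ — has order at most $(q_1-i)+(q_2-q_1)+\cdots+(j-q_{r-1})=j-i$ by telescoping, so it maps $H^{s+j-1}$ continuously into $H^{s+i-1}$, depends continuously on $t$, and, having outermost factor $G_i$, is $O(t)$ in norm by Lemma \ref{thm:FIOest}.

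Now eliminate $u_p$ for $p=m,m-1,\dots,2$ in turn. When $u_p$ is treated, the remaining equations form a closed system for $u_1,\dots,u_p$ in which the coefficient of $u_p$ in its own equation is, by the previous paragraph, a finite sum of such compositions of order $\le0$ with outermost factor $G_p$; it therefore defines $\mathcal G_p^0\colon X_p(t)\to X_p(t)$ of norm $O(t)$, so $L_p:=I-\mathcal G_p^0$ is invertible on $X_p(T^\ast)$ by a Neumann series for $T^\ast$ small, and one solves $u_p=L_p^{-1}\big(\text{const.}+\text{linear in }u_1,\dots,u_{p-1}\big)$. Substituting back preserves the structure ($L_p^{-1}$ is bounded on $X_p$, $c_{ip}$ has order $\le p-i$, composition adds orders), so after the whole elimination the first equation reads $u_1=\widetilde U_1^0+\mathcal G_1^0u_1$ with $\mathcal G_1^0$ of order $0$ on $X_1(t)$ and norm $O(t)$; shrinking $T^\ast$, Banach's fixed point theorem yields a unique $u_1\in C([0,T^\ast],H^s)$, and back-substitution gives $u_k\in C([0,T^\ast],H^{s+k-1})$. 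Tracking the inhomogeneous terms in the same way, $\widetilde U_1^0\in H^s$ follows from $U_l^0\in H^{s+l-1}$ for all $l$, which by Lemma \ref{thm:FIOest} holds exactly when $u_l^0\in H^{s+l-1}$ and $f_l\in C([0,T],H^{s+l-1})$. Uniqueness is immediate since the reduction is reversible, and since $T^\ast$ depends only on the symbols and on $s$ the solution extends to $[0,T]$ by iterating over successive subintervals, as in Remark \ref{rem_2_1}.

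The main obstacle is precisely this bookkeeping: one must verify that, after eliminating $u_m,\dots,u_{p+1}$, the operators acting on $u_p$ itself — and, at the end, on $u_1$ — are of order $\le0$, which is exactly what the Levi-type condition on the lower order terms ($b_{ij}$ of order $j-i$ for $i>j$) guarantees; a secondary point is to ensure that the finitely many operators $L_p$ are simultaneously well defined and invertible on one common interval $[0,T^\ast]$.
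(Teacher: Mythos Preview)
Your proposal is correct and follows essentially the same route as the paper: Duhamel reformulation, successive elimination of $u_m,\dots,u_2$ via invertible operators $L_p=I-\mathcal G_p^0$ on a short time interval, a Banach fixed point for $u_1$, back-substitution, and iteration in time. The one notable presentational difference is your telescoping order argument: by observing that every coupling coefficient $c_{p,q}$ has order $\le q-p$ (for $p<q$ because $a_{pq}+b_{pq}$ has order $\le 1\le q-p$, for $p>q$ by the Levi-type hypothesis), you get directly that any iterated composition arising in the elimination has order $\le j-i$ and hence maps $H^{s+j-1}$ to $H^{s+i-1}$. The paper reaches the same conclusion but more implicitly, encoding the order in the superscript notation $G_{ij}^{j-i}$, $G_{ij}^1$ and verifying the crucial cases by hand at each inductive step; your packaging is cleaner and makes the role of the hypothesis on $b_{ij}$ more transparent. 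A minor cosmetic point: your list $L_m,\dots,L_2$ includes a trivial $L_m=I$ (since the $m$-th equation has no self-coupling), whereas the paper starts the elimination at $L_{m-1}$.
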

\setcounter{section}{2}
\setcounter{thm}{5}

\begin{proof}
Making use of the notations introduced earlier we can write the components of the solution $u$ as
\begin{equation}
\label{formula_u_m}
\begin{split}
u_i(t,x) &= U_i^0 + G_i\left( \sum_{j>i}^m a_{ij}(t,x,D_x)u_j \right) + G_i\left( \sum_{\substack{j=1 \\ j \neq i}}^m b_{ij}(t,x,D_x)u_j \right)\\
&= U_i^0+\sum_{j<i} G_i (b_{ij}(t,x,D_x)u_j)+\sum_{i<j\le m} G_i((a_{ij}+b_{ij})(t,x,D_x)u_j),
\end{split} 
\end{equation}
where  
\[
U_i^0 = G_i^0 u_j^0 + G_i(f_i), 
\]
and $G_i, G_i^0$ are Fourier integral operator of order $0$ for $i=1,\dots, m$. Note that from the fact that $b_{ij}$ is a symbol of order $0$ for every $i,j$ and, in particular, of order $j-i$ for $j<i$ we obtain that the operator $G_i(b_{ij})$ is of order $j-i$ for $j<i$, while $G_i(a_{ij}+b_{ij})$ is, in general, of order $1$. To simplify the argument we introduce the notations $G^{j-i}_{ij}$ and $G^1_{ij}$ for the operators $G_i(b_{ij})$ and $G_i(a_{ij}+b_{ij})$, respectively. Here the superscript stands to remind us of the order of the operator. Hence,
\[
u_i=U_i^0+\sum_{j<i} G^{j-i}_{ij}(u_j)+\sum_{i<j\le m} G^1_{ij}(u_j),
\]
for $i=1,\dots,m$. By begin by substituting  
\[
u_m=U_m^0+\sum_{j<m} G^{j-m}_{mj}(u_j),
\]
into
\[
u_{m-1}=U_{m-1}^0+\sum_{j<m-1} G^{j-m+1}_{m-1,j}(u_j)+G^1_{m-1,m}(u_m).
\]
We get
\[
\begin{split}
u_{m-1}&=U_{m-1}^0+\sum_{j<m-1} G^{j-m+1}_{m-1,j}(u_j)+G^1_{m-1,m}U_m^0+\sum_{j<m} G^1_{m-1,m}G^{j-m}_{mj}(u_j)\\
&=(U_{m-1}^0+G^1_{m-1,m}U_m^0)+\sum_{j<m-1} (G^{j-m+1}_{m-1,j}(u_j)+G^1_{m-1,m}G^{j-m}_{mj}(u_j))\\
&+ G^1_{m-1,m}G^{-1}_{m,m-1}u_{m-1}.
\end{split}
\]
Note that it is enough to assume $U^0_m\in H^{s+1}$ and $U^0_{m-1}\in H^s$ to obtain $U_{m-1}^0+G^1_{m-1,m}U_m^0\in H^{s}$. Since all the operators above are of order $\le 0$ we conclude that the operator
\[
L_{m-1}=I- G^1_{m-1,m}G^{-1}_{m,m-1}:=I-\mathcal{G}^0_{m-1}
\]
is invertible on a sufficiently small interval $[0,T]$ and, therefore, 
\begin{multline}
\label{u_m-1}
u_{m-1}- G^1_{m-1,m}G^{-1}_{m,m-1}u_{m-1}=(U_{m-1}^0+G^1_{m-1,m}U_m^0)\\+\sum_{j<m-1} (G^{j-m+1}_{m-1,j}(u_j)+G^1_{m-1,m}G^{j-m}_{mj}(u_j)),
\end{multline}
yields
\begin{equation}
\label{L_m-1}
u_{m-1}=L^{-1}_{m-1}\widetilde{U}^0_{m-1}+L^{-1}_{m-1}\sum_{j<m-1}\widetilde{G}^{j-m+1}_{m-1}u_j,
\end{equation}
with $\widetilde{U}^0_{m-1}$ and $\widetilde{G}^{j-m+1}_{m-1}$ defined by the right-hand side of \eqref{u_m-1}. We now substitute $u_{m}$ and $u_{m-1}$ into $u_{m-2}$ making use of \eqref{L_m-1}. We obtain
\begin{equation}
\label{u_m-2_form}
\begin{split}
&u_{m-2}=U_{m-2}^0+\sum_{j<{m-2}} G^{j-m+2}_{m-2,j}(u_j)+G^1_{m-2,m-1}(u_{m-1})+G^1_{m-2,m}(u_{m})\\
&=U_{m-2}^0+\sum_{j<{m-2}} G^{j-m+2}_{m-2,j}(u_j)+G^1_{m-2,m-1}L^{-1}_{m-1}\widetilde{U}^0_{m-1}\\
&+G^1_{m-2,m-1}L^{-1}_{m-1}\sum_{j< m-2}\widetilde{G}^{j-m+1}_{m-1}u_j+ G^1_{m-2,m-1}L^{-1}_{m-1} \widetilde{G}^{-1}_{m-1}u_{m-2}\\
&+G^1_{m-2,m}U^0_m+ G^1_{m-2,m}\sum_{j<m-2} G^{j-m}_{mj}(u_j)+G^1_{m-2,m}G^{-2}_{m,m-2}u_{m-2}\\
&+G^1_{m-2,m}G^{-1}_{m,m-1}L^{-1}_{m-1}\widetilde{U}^0_{m-1} \\
&+G^1_{m-2,m}G^{-1}_{m,m-1}L^{-1}_{m-1}\sum_{j<m-2}\widetilde{G}^{j-m+1}_{m-1}u_j+G^1_{m-2,m}G^{-1}_{m,m-1}L^{-1}_{m-1} \widetilde{G}^{-1}_{m-1}u_{m-2}.
\end{split}
\end{equation}
We set 
\begin{multline}
\label{indata_m-2}
\widetilde{U}^0_{m-2}=U_{m-2}^0+G^1_{m-2,m-1}L^{-1}_{m-1}\widetilde{U}^0_{m-1}\\
+G^1_{m-2,m}U^0_m+G^1_{m-2,m}G^{-1}_{m,m-1}L^{-1}_{m-1}\widetilde{U}^0_{m-1}. 
\end{multline}
The operators $G^1_{m-2,m-1}L^{-1}_{m-1}$ and $G^1_{m-2,m}$ in \eqref{indata_m-2} are of order 1. Keeping in mind that we already assumed $U^0_{m}\in H^{s+1}$ and $U^0_{m-1}\in H^s$, in order to obtain Sobolev order $s$ the initial data $U^0_m$, $U^{0}_{m-1}$ and $U^0_{m-2}$ must belong to $H^{s+2}$, $H^{s+1}$ and $H^s$, respectively.  Thus,
\begin{equation}
\label{u_m-2}
u_{m-2}= \widetilde{U}^0_{m-2}+\mathcal{G}^0_{m-2}u_{m-2}+\sum_{j<m-2}\widetilde{G}^{j-m+2}_{m-2}u_j,
\end{equation}
where $\mathcal{G}^0_{m-2}$ is a zero order operator defined by 
\begin{multline*}
\mathcal{G}^0_{m-2}u_{m-2}=G^1_{m-2,m-1}L^{-1}_{m-1} \widetilde{G}^{-1}_{m-1}u_{m-2}+G^1_{m-2,m}G^{-2}_{m,m-2}u_{m-2}\\
+G^1_{m-2,m}G^{-1}_{m,m-1}L^{-1}_{m-1} \widetilde{G}^{-1}_{m-1}u_{m-2},
\end{multline*}
and the last summand in \eqref{u_m-2} is obtained by collecting all the operators acting on $u_j$ with $j<m-2$ in \eqref{u_m-2_form}. Since the norm of $\mathcal{G}^0_{m-2}$ can be taken strictly less than one in a sufficiently small interval $[0,T]$ we have that the operator 
\[
L_{m-2}=I-\mathcal{G}^0_{m-2}
\]
is invertible and, therefore,
\begin{equation}
\label{L_m-2}
u_{m-2}=L_{m-2}^{-1} \widetilde{U}^0_{m-2} +\sum_{j<m-2}L_{m-2}^{-1}\widetilde{G}^{j-m+2}_{m-2}u_j.
\end{equation}
Note that $ \widetilde{U}^0_{m-2} \in H^s$ if $U^0_{m}\in H^{s+2}$, $U^0_{m-1}\in H^{s+1}$ and $U^0_{m-2}\in H^s$. By iterating the same procedure we deduce that
\begin{equation}
\label{u_k}
u_{k}= \widetilde{U}^0_{k}+\mathcal{G}^0_{k}u_k+\sum_{j<k}\widetilde{G}^{j-k}_{k}u_j,
\end{equation}
where $\widetilde{U}^0_k$ depends on $U^0_k$, $U^0_j$ and $\widetilde{U}^0_j$ with $j>k$ and $\mathcal{G}^0_k$ is a zero order operator defined by using invertible operators $L_{m-1}$, $L_{m-2}$,\dots, $L_{k}$. In addition, we obtain $\widetilde{U}^0_k\in H^s$ since $U^0_m\in H^{s+m-k}$, $U^0_{m-1}\in H^{s+m-k-1},\dots, U^0_k\in H^s$. It follows that for $k=2$ we have 
\[
u_2=\widetilde{U}^0_{2}+\mathcal{G}^0_{2}u_2+\widetilde{G}^{-1}_{2}u_1,
\]
where $\mathcal{G}^0_2$ is a zero order operator defined by invertible operators $L_{m-1}$, $L_{m-2}$,\dots, $L_{2}$, $\widetilde{G}^{-1}_{2}$ is of order $-1$, and $\widetilde{U}^0_2\in H^s$ since $U^0_m\in H^{s+m-2}$, $U^0_{m-1}\in H^{s+m-3},\dots, U^0_2\in H^s$. Hence, by inverting the operator $L_2=I-\mathcal{G}^0_2$ on a sufficiently small interval $[0,T]$ we have
\[
u_2=L^{-1}_2\widetilde{U}^0_{2}+L^{-1}_2\widetilde{G}^{-1}_{2}u_1.
\]
Now by substitution of $u_2, u_3, \dots, u_m$ in the equation of $u_1$ we arrive at the formula \eqref{u_k} with $k=1$, i.e.,
\[
u_1= \widetilde{U}^0_{1}+\mathcal{G}^0_{1}u_1,
\]
where $\widetilde{U}^0_{1}\in H^s$ since $U^0_m\in H^{s+m-1}$, $U^0_{m-1}\in H^{s+m-2},\dots, U^0_2\in H^{s+1}, U^0_1\in H^s$. Concluding, by the Banach fix point argument we prove that there exists a unique $u_1\in C([0,T], H^s)$ solving the equation above with the given initial conditions.  By substitution in the equations for $u_2, \dots, u_{m-1}, u_m$ we arrive at the desired Sobolev well-posedness with $u_k\in C([0,T], H^{s+k-1})$ for $k=2,\dots, m$. Note that, since the sufficiently small interval $[0,T]$ where we get well-posedness does not depend on the initial data, by a standard iteration argument we can achieve well-posedness on any bounded interval $[0,T]$ as stated in the theorem.
 \end{proof}

\section{Schur decomposition of $m \times m$ matrices}
\label{SEC:Schur}

In this section we investigate how to reduce an $m\times m$ matrix to the upper triangular form. 
We recall that such decomposition is well-known for constant matrices and goes under the name of Schur's triangularisation, with its statement given in Theorem \ref{THM:Schuri}.

One of the difficulties when dealing with variable multiplicities is the loss of regularity in the parameters at the points of multiplicities. In the following, we will assume that $A$ is a matrix of first order symbols continuous with respect to $t$, i.e., $A(t,x,\xi) \in \big( C S^1 \big)^{m \times m}$.

We will now develop a parameter dependent extension of the Schur triangularisation procedure and we will describe it step by step. Then we will give an example for it for the systems of low sizes, namely, for $m=2$ and $m=3$.

In the case of $m=2$ the construction below was introduced in  \cite{GramRuz2013} and now we give its general version for systems of any size.

\subsection{First step or Schur step}

The first step in our triangularisation follows the construction in the constant case except that we will not get a unitary transformation matrix.  For this reason we talk of a Schur step. Throughout this paper $e_i$ denotes the $i$-th vector of the standard basis of $\R^n$ with an appropriate dimension $n$.

\begin{proposition}[Schur step] \label{prop:PrepSchur} \hfill \\ Let the $m \times m$ matrix valued symbol $A(t,x,\xi)=[a_{ij}(t,x,\xi)]_{i,j=1}^{m}$, $a_{ij} \in C S^1$, have an eigenvalue $\lambda \in C S^1$ and a corresponding eigenvector $h \in \big( C S^1 \big)^{m}$ such that there exists $j \in \{ 1, \dots, m \}$ with \begin{equation} \label{eq:EVneq0}
		\la h(t,x,\xi) | e_j \ra \neq 0 \quad \forall (t,x,\xi) \in [0,T] \times \mathbb{R}^n \times \{ |\xi| \geq M \},
	\end{equation} for a sufficiently large $M>0$. Then there exist an $m \times m$ matrix valued symbol $T(t,x,\xi)$, invertible for $(t,x,\xi) \in [0,T] \times \mathbb{R}^n \times \{ |\xi| \geq M \}$, and an $(m-1) \times (m-1)$ matrix  valued symbol $E(t,x,\xi)$ with entries in  $C S^0$ and $C S^1$, respectively, such that \begin{equation*}
	T^{-1}(t,x,\xi)A(t,x,\xi)T(t,x,\xi) = \begin{bmatrix}
		\lambda   & a_{12}  & \cdots & a_{1m} \\ 
		0         &         &        &         \\
		\vdots    &         &   E(t,x,\xi)    &         \\
		0         &         &        &         \end{bmatrix}
\end{equation*} for all $(t,x,\xi) \in [0,T] \times \mathbb{R}^n \times \{ |\xi| \geq M \}$.
\end{proposition}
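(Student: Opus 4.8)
The plan is to mimic the classical Schur construction from Theorem~\ref{THM:Schuri}, replacing the Gram--Schmidt/unitary step by an explicit (non-unitary) invertible symbol that moves the eigenvector $h$ onto the first coordinate axis. Without loss of generality assume $j=1$ in \eqref{eq:EVneq0} (otherwise compose with the constant permutation matrix swapping the first and $j$-th coordinates, which is unitary and does not affect the symbol classes). First I would normalise: since $\la h(t,x,\xi)\mid e_1\ra \neq 0$ for $|\xi|\geq M$, set $\tilde h = h/\la h\mid e_1\ra$, so that $\tilde h \in (C S^0)^m$ with first component $1$ (the quotient is a symbol of order $0$ because $\la h\mid e_1\ra \in C S^1$ is elliptic on $|\xi|\geq M$, hence its reciprocal is in $C S^{-1}$, and $h\in (C S^1)^m$; one should check the product lands in $C S^0$ using that $h$ is an eigenvector with eigenvalue of order $1$, so in fact $h$ behaves like an order-$0$ object up to the normalisation — alternatively one simply assumes $h\in (CS^1)^m$ and the quotient is in $(CS^0)^m$ by the quotient rule for symbols). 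Then define the matrix-valued symbol
\begin{equation*}
	T(t,x,\xi) = \begin{bmatrix} \tilde h(t,x,\xi) & \begin{matrix} 0 & \cdots & 0 \end{matrix} \\[2pt] & I_{m-1} \end{bmatrix},
\end{equation*}
i.e.\ the first column is $\tilde h$ and the remaining columns are $e_2,\dots,e_m$. Its determinant is the first component of $\tilde h$, which is identically $1$, so $T$ is invertible for all $(t,x,\xi)\in[0,T]\times\R^n\times\{|\xi|\geq M\}$, with inverse also having entries in $C S^0$ (by Cramer's rule, since $\det T \equiv 1$); in fact $T^{-1}$ is again lower triangular-plus-first-column of the same shape with first column $(1,-\tilde h_2,\dots,-\tilde h_m)^{\mathsf T}$.

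Next I would compute $T^{-1} A T$ columnwise. The first column of $AT$ is $A\tilde h = \lambda \tilde h$ since $\tilde h$ is an eigenvector with eigenvalue $\lambda$ (scaling the eigenvector does not change the eigenvalue). Applying $T^{-1}$ and using $T^{-1}\tilde h = e_1$ (because $\tilde h$ is literally the first column of $T$), the first column of $T^{-1}AT$ is $\lambda e_1 = (\lambda,0,\dots,0)^{\mathsf T}$. This is exactly the required shape: the $(1,1)$ entry is $\lambda$ and all entries strictly below the diagonal in the first column vanish. The remaining $m-1$ columns give the first row entries $a_{12},\dots,a_{1m}$ (relabelled — these are the entries of $T^{-1}AT$, in $C S^1$) and the lower-right $(m-1)\times(m-1)$ block $E(t,x,\xi)$, whose entries are finite sums of products of entries of $A$ (order $1$) with entries of $T,T^{-1}$ (order $0$), hence lie in $C S^1$; the claim that some of them can be taken in $C S^0$ refers to... actually re-reading, the statement says $E$ has entries in $C S^1$ and the transformation matrix $T$ has entries in $C S^0$, so I would just record the two symbol-class memberships separately. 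All symbol-class assertions follow from the standard calculus: $C S^a \cdot C S^b \subseteq C S^{a+b}$, and reciprocals of elliptic symbols.

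The only genuine subtlety — and the step I expect to need the most care — is the symbol-class bookkeeping for the normalised eigenvector $\tilde h$ and for $T^{-1}$, i.e.\ verifying that dividing by $\la h\mid e_1\ra$ and inverting $T$ does not spoil membership in $C S^0$ uniformly on $[0,T]\times\R^n\times\{|\xi|\geq M\}$; this is where the lower bound away from zero in \eqref{eq:EVneq0} is used, together with the elementary fact that a scalar symbol in $C S^1$ that is bounded below by $c|\xi|$ (or merely bounded away from $0$ — one should be slightly careful about which homogeneity is meant, but $M$ large handles it) has reciprocal in the appropriate negative-order class, with all seminorms controlled. Since $\det T \equiv 1$ identically, the inversion introduces no further denominators and $T^{-1}\in (C S^0)^{m\times m}$ follows immediately from the cofactor formula. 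The eigenvalue entry $\lambda \in C S^1$ and the upper-triangular shape are then automatic from the columnwise computation above, completing the proof.
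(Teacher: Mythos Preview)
Your proposal is correct and follows essentially the same construction as the paper: normalise the eigenvector so its first component is $1$, take $T=[\tilde h\; e_2\;\cdots\;e_m]$, and read off the block form from $A\tilde h=\lambda\tilde h$ and $T^{-1}\tilde h=e_1$. One small correction: the first-row entries $a_{12},\dots,a_{1m}$ in the statement are \emph{literally} those of the original $A$ (since the $j$th column of $T$ is $e_j$ for $j\geq 2$ and the first row of $T^{-1}$ is $e_1^{\mathsf T}$), not ``relabelled'' new entries; and your caution about the symbol class of $\tilde h$ is well-placed---the paper glosses over this too, and in the subsequent applications (Theorem~\ref{thm:Schuri}) the eigenvectors are actually taken in $(CS^0)^m$.
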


\begin{proof} First let us note that we can assume that $j=1$ in \eqref{eq:EVneq0}. If that is not the case, we can exchange the rows $1$ and $j$ as well as columns $1$ and $j$ to move the $j$th component of the eigenvector to the first component. 
	
	We define the rescaled eigenvector $\mu$ componentwise by
	\begin{equation*}
		\mu_{i}(t,x,\xi) = \frac{\la h(t,x,\xi) | e_i \ra}{ \la h(t,x,\xi) | e_1 \ra} \quad \forall i = 1, \dots, m.
	\end{equation*}
	
	
	Now we set \begin{equation*}
		T(t,x,\xi) = \begin{bmatrix}
			\mu_{1}  & 0   & \dots   & 0 \\
			\mu_{2}	&     &         &         \\
			\vdots    	&     & I_{m-1} &         \\
			\mu_{m}	&     &         &
		\end{bmatrix}.
	\end{equation*}
	Since $\mu_1\equiv 1$ it follows that 
	
	\begin{equation*}
		\quad T^{-1}(t,x,\xi) = \begin{bmatrix}
			\mu_{1}  & 0    & \dots   & 0 \\
			-\mu_{2}	&      &         &         \\
			\vdots    	&      & I_{m-1} &         \\
			-\mu_{m}	&      &         &
		\end{bmatrix},
	\end{equation*} where $I_{m-1}$ is the $(m-1)\times(m-1)$ identity matrix. By direct computations we get \begin{equation*}
	A T = \begin{bmatrix}
		\sum\limits_{j=1}^m a_{1j} \mu_{j} & & & \\
		\vdots & A_{(2)} & \dots & A_{(m)}\\
		\sum\limits_{j=1}^m a_{mj} \mu_{j} & & &
	\end{bmatrix} 
	= \begin{bmatrix}
		\lambda \mu_{1} & & & \\
		\vdots & A_{(2)} & \dots & A_{(m)}\\
		\lambda \mu_{m} & & &
	\end{bmatrix},
\end{equation*} where we used that 
\begin{equation} \label{eq:EV1}
\sum_{j=1}^m a_{ij} \mu_{j} = \lambda \mu_{i}, \quad i=1,\dots,m,
\end{equation} 
and denoted the $i$th column of $A$ by $A_{(i)}$. The equations in \eqref{eq:EV1} are given by the eigenvalue equation $A \mu = \lambda \mu$. Further, from $\mu_{1} \equiv 1$ we obtain \begin{eqnarray}
\nonumber T^{-1} A T &=& \begin{bmatrix}
	\lambda \mu_{1}^2 & a_{12}\mu_{1} & \dots & a_{1m}\mu_{1} \\
	-\mu_{2}\mu_{1} \lambda + \mu_{2} \lambda &  &  & \\
	\vdots & & E & \\
	-\mu_{m} \mu_{1} \lambda + \mu_{m} \lambda & & & &
\end{bmatrix} \\
\label{aux:Trafo1} &=& \begin{bmatrix}
	\lambda & a_{12} & \dots & a_{1m} \\
	0 &  &  & \\
	\vdots & & E & \\
	0 & & & &
\end{bmatrix},
\end{eqnarray} which concludes the proof. Note that by construction the matrix $E$ has entries in $C S^1$ which depend on $A$. In particular its eigenvalues are the eigenvalues of $A$ excluding $\lambda$ (counted as many times as they occur).  
\end{proof} 



Applying Proposition \eqref{prop:PrepSchur} repeatedly for $m-2$ times to $E$, we obtain a full Schur transformation of $A$, that is a full reduction to an upper triangular form. In the next subsection we describe this iteration in detail. This triangularisation procedure is summarised in Theorem \ref{thm:Schur} where sufficient conditions on the eigenvectors of $A$ are given.\\

\subsection{The triangularisation procedure} \label{sec:Procedure}

The reduction to an upper triangular form or the Schur transformation of $A$ is possible under certain conditions on its eigenvectors. More precisely, let 
$$h_1(t,x,\xi),\dots, h_{m-1}(t,x,\xi) \in \big( C S^0 \big)^m$$ be $m-1$ eigenvectors of $A(t,x,\xi) = [a_{ij}(t,x,\xi)]_{i,j=1}^m$,  $a_{ij} \in C S^1$, corresponding to the eigenvalues $\lambda_1(t,x,\xi)$, $\dots$, $\lambda_{m-1}(t,x,\xi) \in C S^1$. To formulate the sufficient conditions for the existence of such Schur transformation, we introduce a set of auxiliary vectors $h^{(i)}$, $i=1,\dots,m-1$, which depend only on $h_i$ and the previous vectors $h^{(j)} \in C S^0$, $j=1,\dots,i-1$. When $i=1$ we set $h^{(1)}=h_1$.

As in Proposition \ref{prop:PrepSchur} we begin by assuming 
\begin{equation}
	\label{cond_1}
	\la h^{(1)}(t,x,\xi) | e_1 \ra \neq 0
\end{equation}
for $(t,x,\xi) \in [0,T] \times \mathbb{R}^n \times \{ |\xi| \geq M \}$.

\begin{remark} \label{rem:Transform}
	As noted in the proof of Proposition \ref{prop:PrepSchur}, we could have that $$\la h^{(1)}(t,x,\xi) | e_j \ra \neq 0$$ for another arbitrary $j \in \{ 1, \dots,m \}$. Then, we could transform the matrix $A(t,x,\xi)$ by a constant permutation matrix $P$ such that $P^{-1}h^{(1)}$ is eigenvector of $P^{-1}AP$ corresponding to $\lambda_1$ which satisfies $\la P^{-1} h^{(1)}(t,x,\xi) | e_1 \ra \neq 0$. For this reason we state \eqref{cond_1} with $h^{(1)}$ and $e_1$.
\end{remark}

\begin{enumerate}[{\quad {\bf Step}} {\bf 1}]
	\item By Proposition \ref{prop:PrepSchur} there exists a matrix $T_1$ such that \begin{equation*}
		T^{-1}_1 A T_1 = \begin{bmatrix}
			\lambda_1 & a_{12} & \cdots & a_{1m} \\ 
			0         &         &        &         \\
			\vdots    &         &   E_{m-1}  &         \\
			0         &         &        &         \end{bmatrix}.
	\end{equation*} The matrix $T_1$ is given by \begin{equation*}
	T_1 = \begin{bmatrix}
		\omega_1 & e_2 & \dots & e_m 
	\end{bmatrix}, \quad \omega_1 = \begin{bmatrix}
	\omega_{11}  & \dots & \omega_{1m}
\end{bmatrix}^T
\end{equation*} with \begin{equation*}
\omega_{1j} = \frac{\la h^{(1)}(t,x,\xi) | e_j \ra }{\la h^{(1)}(t,x,\xi) | e_1 \ra}.
\end{equation*}
In the sequel we make use of the projector $\Pi_k : \R^m \to \R^{m-k}$,  $0 \leq k \leq m-1$, defined by
\begin{equation*}
	\Pi_k \begin{bmatrix}
		x_1 \\ \vdots \\ x_m
	\end{bmatrix} = \begin{bmatrix}
	x_{k+1} \\ \vdots \\ x_m
\end{bmatrix}.
\end{equation*}
Note that $\Pi_0$ is the identity map $I_m:\R^m\to \R^m$.

\item[\quad {\bf Step 2}] Since $h_2$ is an eigenvector of $A$ with eigenvalue $\lambda_2$ we get that $T_1^{-1}h_2$ is an eigenvector of $T^{-1}_1A T_1$ with eigenvalue $\lambda_2$ as well. By the structure of $T^{-1}_1A T_1$ we easily see that $h^{(2)} := \Pi_1 T_1^{-1} h_2$ is an eigenvector of $E_{m-1}$, corresponding to $\lambda_2$.

Arguing as in Remark \ref{rem:Transform} we assume that \begin{equation}
	\label{cond_2}
	\la \Pi_1 T_1^{-1} h_2 | e_1 \ra \neq 0 \quad \forall (t,x,\xi) \in [0,T] \times \mathbb{R}^n \times \{ |\xi| \geq M \},
\end{equation} to be able to apply Proposition \ref{prop:PrepSchur} to $E_{m-1}$. We get that there exists an $(m-1) \times (m-1)$ matrix $\tilde{T}_2$ such that $\tilde{T}_2^{-1}E_{m-1}\tilde{T}_2$ is of form \begin{equation*}
\begin{bmatrix}
	\lambda_2 & \ast & \dots   & \ast \\
	0         &      &         &      \\
	\vdots    &      & E_{m-2} & \\
	0         &      &         &
\end{bmatrix},
\end{equation*} 
where in the first row the first row of $E_{m-1}$ appears. Thus, setting \begin{equation*}
	T_2 = \begin{bmatrix}
		1       &  0  &  \dots    &  0      \\
		0       &  	  &		 &		  \\
		\vdots  &     & \tilde{T}_2 &        \\
		0       &     &      &       
	\end{bmatrix},
\end{equation*} we obtain \begin{equation}
\label{T1T2}
T_2^{-1}T_1^{-1} A T_1 T_2 = \begin{bmatrix}
	\lambda_1 & \ast & \ast & \dots & \ast \\
	0 & \lambda_2 & \ast & \dots & \ast \\
	0 & 0 &  & & \\
	\vdots & \vdots &  & E_{m-2} & \\
	0 & 0 & & &
\end{bmatrix}.
\end{equation} 
Note that in \eqref{T1T2} we write explicitly only the entries most relevant to our triangularisation.
To compute the matrix $\tilde{T}_2$, we set \begin{equation*}
	\omega_2 = \begin{bmatrix}
		\omega_{22} & \dots & \omega_{2m}
	\end{bmatrix}^T, 
\end{equation*} where
 
\begin{equation*}
	\omega_{2j}(t,x,\xi) := \frac{\la h^{(2)}(t,x,\xi) | e_j \ra}{\la h^{(2)}(t,x,\xi) | e_1  \ra} , \quad j=2,\dots,m,
\end{equation*} and then \begin{equation*}
\tilde{T}_2 = \begin{bmatrix}
	\omega_2 & e_2 & \dots & e_{m-1}
\end{bmatrix}.
\end{equation*}
It is clear that $T_2$ has the same structure as $T_1$, i.e., it is defined via a rescaled eigenvector as the first column and an identity matrix ($I_{m-1}$ for $T_1$ and $I_{m-2}$ for $T_2$).
\item[\quad {\bf Step k}] By iterating the method $k-1$ times we can find $k-1$ matrices $T_1, T_2, \cdots, T_{k-1}$ of size  $m\times m$ such that  \begin{eqnarray*}
	&& T_{k-1}^{-1} \cdot \dots \cdot T_1^{-1} A T_1 \cdot \dots \cdot T_{k-1} = \\
	&& \qquad \begin{bmatrix}
		\lambda_1 & \ast & \ast & \dots & \dots & \ast \\
		0 & \ddots & \ast & \dots & \dots & \ast\\
		0 & 0 & \lambda_{k-1} & \ast & \dots &\ast\\
		0 & 0 & 0 & & & \\
		\vdots & \vdots & \vdots &  & E_{m-{k+1}}  & \\
		0 & 0 & 0 & &
	\end{bmatrix},
\end{eqnarray*} 
where $E_{m-{k-1}}$ is a $(m-k+1) \times (m-k+1)$ matrix and the equality is true on $[0,T]\times\mathbb{R}^n\times\{|\xi|\ge M\}$. Since $h_k$ is an eigenvector of $A$ corresponding to $\lambda_k$, the vector \begin{equation*}
	T_{k-1}^{-1} T_{k-2}^{-1} \cdot \dots \cdot T_1^{-1} h_k
\end{equation*} is an eigenvector of \begin{equation*}
T_{k-1}^{-1} T_{k-2}^{-1} \cdot \dots \cdot T_1^{-1} A T_1 T_2 \cdot \dots \cdot T_{k-1}
\end{equation*} and  \begin{equation*}
h^{(k)} := \Pi_{k-1} T_{k-1}^{-1} T_{k-2}^{-1} \cdot \dots \cdot T_1^{-1} h_k \in \big( C S^0 \big)^{m-k+1}
\end{equation*} an eigenvector of $E_{m-(k-1)}$ corresponding to $\lambda_k$. Thus, to satisfy the assumptions of Proposition \ref{prop:PrepSchur} and keeping in mind Remark \ref{rem:Transform}, we require that \begin{equation}
\label{cond_k}
\la h^{(k)}(t,x,\xi) | e_1 \ra \neq 0 \quad \forall (t,x,\xi) \in [0,T] \times \mathbb{R}^n \times \{ |\xi| \geq M \}.
\end{equation} It follows that there exists an $(m-k+1) \times (m-k+1)$ transformation matrix $\tilde{T}_k$ such that $\tilde{T}^{-1}_{k} \dots \tilde{T}_1^{-1}A\tilde{T}_{k} \dots \tilde{T}_1$ is of the form \begin{equation*}
\begin{bmatrix}
	\lambda_k & \ast & \dots   & \ast \\
	0         &      &         &      \\
	\vdots    &      & E_{m-k} & \\
	0         &      &         &
\end{bmatrix}.
\end{equation*} and set \begin{equation*}
T_k = \begin{bmatrix}
	I_{k-1} & \bf{0} \\ 
	\bf{0} & \tilde{T}_k
\end{bmatrix}.
\end{equation*} The matrix $\tilde{T}_k$ is defined by \begin{equation*}
\tilde{T}_k = \begin{bmatrix}
	\omega_k & e_2 & \dots & e_{m-k+1}
\end{bmatrix}, \quad \omega_k = \begin{bmatrix}
\omega_{kk} & \dots & \omega_{km}
\end{bmatrix}^T,
\end{equation*} where \begin{equation*}
\omega_{kj} = \frac{\la h^{(k)}(t,x,\xi) | e_j \ra}{\la h^{(k)}(t,x,\xi) | e_1 \ra}, \quad j = k, \dots, m.
\end{equation*}

\item[{\quad {\bf Step}} {\bf m-1}] This is the last step as $E_2$ is a $2 \times 2$ matrix. We have that \begin{equation*}
	h^{(m-1)} = \Pi_{m-2} T_{m-2}^{-1} \cdot \dots \cdot T_{1}^{-1} h_{m-1} \in \big( C S^0 \big)^{2}
\end{equation*} is an eigenvector of $E_2$ corresponding to $\lambda_{m-1}$ and that $\tilde {T}_{m-1}$ exists as before if \begin{equation}
\label{cond_m-1}
\la h^{(m-1)}(t,x,\xi) | e_1 \ra \neq 0 \quad \in \forall (t,x,\xi) \in [0,T] \times \mathbb{R}^n \times \{ |\xi| \geq M \}.
\end{equation} The matrix  $\tilde{T}_{m-1}$ is given by \begin{equation*}
\tilde{T}_{m-1} = \begin{bmatrix}
	\omega_{m-1} & e_{2}
\end{bmatrix} = \begin{bmatrix}
\omega_{m-1,m-1} & 0 \\
\omega_{m-1,m} & 1
\end{bmatrix},
\end{equation*} where \begin{equation*}
\omega_{m-1,j} = \frac{\la h^{(m-1)}(t,x,\xi) | e_j \ra }{\la h^{(m-1)}(t,x,\xi) | e_1 \ra} , \quad j =m-1,m,
\end{equation*} and then \begin{equation*}
T_{m-1} = \begin{bmatrix}
	I_{m-2} & \bf 0 \\
	\bf 0 & \tilde{T}_{m-1}
\end{bmatrix}.
\end{equation*}
\end{enumerate}

We are now ready to state Theorem \ref{thm:Schur} which summarises the triangularisation procedure explained above. For the convenience of the reader we recall the notations introduced so far:
\begin{itemize}
	\item $h_1, \dots, h_{m-1}$ are the eigenvectors of the matrix $A$ corresponding  to the eigenvalues $\lambda_1, \dots, \lambda_{m-1}$.
	\item $h^{(1)}=h_1$ and 
	\begin{equation}\label{EQ:his}
		h^{(i)} = \Pi_{i-1}T_{i-1}^{-1} T_{i-2}^{-1} \cdot \,\dots\, \cdot T_1^{-1} h_i \in \big( C S^0\big)^{m-k+1} ,
	\end{equation} 
	for $i=2, \dots,m-1$.
	\item the matrices $T_k$ are inductively defined as follows: $T_0 = I_{m}$ and \begin{equation*}
		T_k = \begin{bmatrix}
			I_{k-1} & \bf 0 \\
			\bf 0 & \tilde{T}_k
		\end{bmatrix}, \quad \tilde{T}_k = \begin{bmatrix}
		\omega_{k} & e_{2} & \dots &e_{m-k}
	\end{bmatrix}, \quad e_i \in \R^{m-k},
\end{equation*} where \begin{equation*}
\omega_{kj} = \frac{\la h^{(k)}(t,x,\xi) |  e_j \ra}{\la h^{(k)}(t,x,\xi), e_1 \ra}, \quad j = k,\dots,m.
\end{equation*} 
\end{itemize}
Finally, we note that $h^{(k)}$ depends only on $T_{k-1}$, $\dots$, $T_1$ and, thus, only on the eigenvectors $h^{(k-1)}$, $\dots$, $h^{(1)}$.


Summarising, we can formulate a more precise version of Theorem \ref{thm:Schuri}.

\begin{thm}[Schur Decomposition] \label{thm:Schur} \hfill \\
	Let $A(t,x,\xi) = [a_{ij}]_{i,j=1}^m$, $a_{ij} \in C S^1$ be a matrix with eigenvalues $\lambda_1, \dots,\lambda_{m} \in C S^1$, and let $h_1 , \dots, h_{m-1}  \in \big( C S^0 \big)^m$ be the corresponding eigenvectors. Suppose that for $e_1 \in \R^{m-i+1}$ the condition \begin{equation} \label{eq:CondThmSchur}
		\la h^{(i)}(t,x,\xi) | e_1 \ra \neq 0, \quad \forall (t,x,\xi) \in [0,T] \times \R^n \times \R^n 
	\end{equation} holds for all $i=1,\dots, m-1$, with the notation explained above. Then, there exists a matrix-valued symbol $T(t,x,\xi)= [t_{ij}]_{i,j=1}^m$, $t_{ij} \in C S^0$, invertible for $(t,x,\xi) \in [0,T] \times \R^n \times \{ |\xi| \geq M\}$, such that \begin{equation*}
	T^{-1}(t,x,\xi)A(t,x,\xi) T(t,x,\xi) = \Lambda(t,x,\xi) + N(t,x,\xi)
	\end{equation*} for all $(t,x,\xi) \in [0,T] \times \R^n \times \{ |\xi| \geq M \}$, where   \begin{equation*}
	\Lambda(t,x,\xi) = \diag(\lambda_1(t,x,\xi),\lambda_2(t,x,\xi),\dots,\lambda_m(t,x,\xi))
	\end{equation*} and \begin{equation*}
	N(t,x,\xi) = \begin{bmatrix}
	0 & N_{12}(t,x,\xi) & N_{13}(t,x,\xi) & \cdots & N_{1m}(t,x,\xi) \\
	0 & 0 & N_{23}(t,x,\xi) & \cdots & N_{2m}(t,x,\xi) \\
	\vdots & \vdots & \vdots & \cdots & \vdots \\
	0 & 0 & 0 & \dots & N_{m-1m}(t,x,\xi)\\
	0 & 0 & 0 & \dots & 0
	\end{bmatrix},
	\end{equation*}
	and $N$ is a nilpotent matrix with entries in $C S^1$. Furthermore, the matrix symbol $T$ is given by \begin{equation*}
	T(t,x,\xi) = T_1 T_2 \cdot \dots \cdot T_{m-1},
	\end{equation*} with the notation explained above.
\end{thm}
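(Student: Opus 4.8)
The plan is to verify that the step-by-step construction of Section~\ref{sec:Procedure} goes through under hypothesis~\eqref{eq:CondThmSchur} and delivers a transformation matrix with the stated regularity; the argument is an induction on the size of the lower-right block still to be triangularised. At the first stage I would apply Proposition~\ref{prop:PrepSchur} to $A$ with the eigenvalue $\lambda_1$ and the eigenvector $h^{(1)}=h_1$: condition~\eqref{eq:CondThmSchur} for $i=1$ is exactly the non-degeneracy condition~\eqref{eq:EVneq0} needed there (after the harmless constant permutation of Remark~\ref{rem:Transform} if the nonzero component is not the first). This produces $T_1$, whose entries are the ratios $\omega_{1j}=\langle h^{(1)}|e_j\rangle/\langle h^{(1)}|e_1\rangle$; since $h^{(1)}\in(C S^0)^m$ and the denominator is bounded away from zero on $\{|\xi|\ge M\}$, these lie in $C S^0$, and by the explicit formula for $T_1^{-1}$ in the proof of Proposition~\ref{prop:PrepSchur} the inverse has $C S^0$ entries too. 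Moreover $T_1^{-1}AT_1$ has $\lambda_1$ in the top-left entry, zeros below it in the first column, and an $(m-1)\times(m-1)$ block $E_{m-1}$ with $C S^1$ entries whose eigenvalues are $\lambda_2,\dots,\lambda_m$.

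For the inductive step I would use the observation, already made in Section~\ref{sec:Procedure}, that since $h_k$ is an eigenvector of $A$ for $\lambda_k$, the vector $T_{k-1}^{-1}\cdots T_1^{-1}h_k$ is an eigenvector of $T_{k-1}^{-1}\cdots T_1^{-1}AT_1\cdots T_{k-1}$ for the same eigenvalue; by the upper-triangular structure already achieved in the first $k-1$ rows and columns, its truncation $h^{(k)}=\Pi_{k-1}T_{k-1}^{-1}\cdots T_1^{-1}h_k$ is an eigenvector of the block $E_{m-k+1}$ for $\lambda_k$. Hypothesis~\eqref{eq:CondThmSchur} for $i=k$ both forces $h^{(k)}\ne 0$ (so that it is a genuine eigenvector) and supplies precisely the non-vanishing needed to apply Proposition~\ref{prop:PrepSchur} to $E_{m-k+1}$; this yields $\tilde T_k$, and with $T_k=\diag(I_{k-1},\tilde T_k)$ one obtains that $T_k^{-1}\cdots T_1^{-1}AT_1\cdots T_k$ is upper triangular in its first $k$ rows and columns with $\lambda_1,\dots,\lambda_k$ on the diagonal and an $(m-k)\times(m-k)$ block $E_{m-k}$ of $C S^1$ symbols below. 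At each stage one checks that $h^{(k)}\in(C S^0)^{m-k+1}$, because it is obtained from $h_k\in(C S^0)^m$ by left multiplication with the matrices $T_j^{-1}$, $j<k$, all of whose entries are in $C S^0$ on $\{|\xi|\ge M\}$, and that $E_{m-k}$ stays in $(C S^1)^{(m-k)\times(m-k)}$, being a product of $C S^0$ and $C S^1$ matrix symbols.

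After the last step $k=m-1$ (at which $E_2$ is $2\times 2$ and one final Schur step completes the reduction) I would set $T:=T_1T_2\cdots T_{m-1}$. Each factor has $C S^0$ entries and is invertible on $\{|\xi|\ge M\}$ with $C S^0$ inverse, hence so does $T$, and $T^{-1}AT$ is upper triangular with $C S^1$ entries. Its diagonal entries are $\lambda_1,\dots,\lambda_{m-1}$ by construction; the remaining one equals $\lambda_m$ because $T^{-1}AT$ is similar to $A$ and therefore has the same characteristic polynomial, i.e.\ the same multiset of eigenvalues at each $(t,x,\xi)$. Writing $T^{-1}AT=\Lambda+N$ with $\Lambda=\diag(\lambda_1,\dots,\lambda_m)$ then leaves $N$ strictly upper triangular, hence nilpotent, with entries in $C S^1$, which is the assertion; the formula $T=T_1\cdots T_{m-1}$ is immediate from the construction.

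I expect the main obstacle to be bookkeeping rather than anything conceptual: one has to make sure that at every stage the denominator $\langle h^{(k)}|e_1\rangle$ is bounded below uniformly in $(t,x)$ for $|\xi|\ge M$, so that the entries of $T_k$ and $T_k^{-1}$ genuinely belong to $C S^0$ — this is where the conditions~\eqref{eq:CondThmSchur} are used in full strength, together with the continuity in $t$ and the large-$|\xi|$ behaviour of the symbols. A secondary point needing care is the claim that $h^{(k)}$ is a \emph{nonzero} eigenvector of $E_{m-k+1}$: when some of the $\lambda_j$ coincide one cannot conclude $\Pi_{k-1}(\cdots)\ne 0$ from the eigenvalue equation alone, and it is exactly hypothesis~\eqref{eq:CondThmSchur} that excludes this.
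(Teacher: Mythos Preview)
Your proposal is correct and follows essentially the same approach as the paper: the paper's proof \emph{is} the step-by-step triangularisation procedure of Section~\ref{sec:Procedure}, iterating Proposition~\ref{prop:PrepSchur} on the successive lower-right blocks $E_{m-k+1}$ via the vectors $h^{(k)}$, and Theorem~\ref{thm:Schur} is stated as a summary of that construction. Your inductive organisation, the regularity bookkeeping for $T_k$, $T_k^{-1}$ and $E_{m-k}$, and the trace/characteristic-polynomial argument for the final diagonal entry all match what the paper does (the latter implicitly, via the remark at the end of the proof of Proposition~\ref{prop:PrepSchur} that the eigenvalues of $E$ are those of $A$ with $\lambda$ removed).
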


\begin{remark}
\label{rem_gen_cond}
	Taking into account Remark \ref{rem:Transform}, let us stress that condition \eqref{eq:CondThmSchur} is not restrictive as it can be replaced by the following: suppose that there exist $m-1$ numbers $j_i \in \{ 1,\dots, m-i+1 \}$, $i=1, \dots m-1$, such that for all $i=1, \dots, m-1$ \begin{equation} \label{eq:auxSchur2}
			\la h^{(i)}(t,x,\xi) | e_{j_i} \ra \neq 0 \quad \forall (t,x,\xi) \in [0,T] \times \R^n \times \{ |\xi| \geq M \}
		\end{equation} holds.
\end{remark}


\begin{remark}
	Theorem \ref{thm:Schur} is quite general in the sense that the functions $a_{ij}$ could be complex-valued. In this paper, we are concerned with hyperbolic matrices, i.e. we assume that the eigenvalues $\lambda_1, \dots, \lambda_m$ are real. We stress that the Schur transform does not change the hyperbolicity of the matrix as the eigenvalues of $T^{-1} A T$ are also $\lambda_1, \dots, \lambda_m$.
\end{remark}

\begin{remark}
	For our applications in this and future work it is important that the transform $T$ in Theorem \ref{thm:Schur} keeps the regularity of the original matrix $A$, i.e. that the elements of the Schur transform $T^{-1} A T$ are in the same class as the elements of $A$. Here, we stated everything with $C S^1$  and $C S^0$ as that is the regularity considered in this paper. Note that one could replace $C$ with $C^k$ or $C^\infty$ and find a matrix $T$ such that the transformed matrix $T^{-1}AT$ inherits the same regularity with respect to $t$. In addition, one could also drop the regularity in $t$ to $L^\infty$ and the triangularisation procedure would still work preserving the boundedness in $t$ through every step. 
\end{remark}
For the sake of simplicity and the reader's convenience, in the next subsections we analyse Theorem   \ref{thm:Schur} in the special cases of $m=2$ and $m=3$.  

\subsection{The case $m=2$}


We now formulate Theorem \ref{thm:Schur} in the special case $m=2$. In this way we recover the formulation given in \cite{GramRuz2013}.  

\begin{thm}[{\cite[Theorem 7.1]{GramRuz2013}}]
	Suppose that $A(t,x,\xi)=[a_{ij}(t,x,\xi)]_{i,j=1}^2$ admits eigenvalues $\lambda_j(t,x,\xi) \in C S^1$, $j=1,2$, and an eigenvector $h(t,x,\xi) \in (C S^0)^2$ satisfying \begin{equation} \label{cond:EVnonzero}
		\la h(t,x,\xi) | e_j \ra \neq 0, \quad (t,x,\xi) \in [0,T] \times \R^n \times \{|\xi| \geq M\},
	\end{equation} for $j=1$ or $j=2$. Then, we can find a $2 \times 2$ matrix valued symbol $T(t,x,\xi)=[t_{ij}(t,x,\xi)]_{i,j=1}^2$, $t_{ij} \in C S^0$, invertible for $\{|\xi| \geq M \}$, such that \begin{equation*}
	T^{-1}(t,x,\xi)A(t,x,\xi)T(t,x,\xi) = \begin{bmatrix}
		\lambda_1(t,x,\xi) & a_{12}(t,x,\xi) \\
		0 & \lambda_2(t,x,\xi)
	\end{bmatrix}
\end{equation*} for all $(t,x,\xi) \in [0,T] \times \R^n \times \{ |\xi| \geq M \}$.
\end{thm}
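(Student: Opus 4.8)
The plan is to recognise this statement as the case $m=2$ of Theorem~\ref{thm:Schur}, equivalently as a single application of the Schur step of Proposition~\ref{prop:PrepSchur}: for a $2\times 2$ matrix the reduced block $E$ is $1\times 1$ and hence already in triangular form, so no iteration is needed and the whole content is to write the transformation matrix explicitly and track the symbol classes. First I would reduce to the case $j=1$ in \eqref{cond:EVnonzero}. If instead $\la h(t,x,\xi) | e_2 \ra \neq 0$, replace $A$ by $P^{-1}AP$ with the constant permutation $P = \begin{bmatrix} 0 & 1 \\ 1 & 0 \end{bmatrix}$; then $Ph$ is an eigenvector of $P^{-1}AP$ for the same eigenvalue $\lambda_1$ with $\la Ph | e_1 \ra = \la h | e_2 \ra \neq 0$, and a triangularisation $T'$ of $P^{-1}AP$ produces the triangularisation $T = PT'$ of $A$. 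So from now on assume $\la h(t,x,\xi) | e_1 \ra \neq 0$ on $[0,T] \times \R^n \times \{ |\xi| \geq M \}$.

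Next I would set
\[
	\mu_2(t,x,\xi) = \frac{\la h(t,x,\xi) | e_2 \ra}{\la h(t,x,\xi) | e_1 \ra}, \qquad T = \begin{bmatrix} 1 & 0 \\ \mu_2 & 1 \end{bmatrix}, \qquad T^{-1} = \begin{bmatrix} 1 & 0 \\ -\mu_2 & 1 \end{bmatrix},
\]
so that $\det T \equiv 1$ and $T$ is invertible everywhere. The one point that needs care is that $\mu_2 \in C S^0$: the numerator and denominator lie in $C S^0$ since $h \in (C S^0)^2$, and the denominator is non-vanishing on $[0,T]\times\R^n\times\{|\xi|\geq M\}$; understanding this non-vanishing uniformly, i.e. the modulus of $\la h | e_1 \ra$ bounded below by a positive constant there, the reciprocal $1/\la h | e_1 \ra$ is again a symbol of order $0$, continuous in $t$, by the standard Leibniz computation for $\partial(1/p) = -\partial p/p^2$ together with an induction on the order of the derivatives. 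Hence $\mu_2$, and therefore the entries of $T$ and $T^{-1}$, belong to $C S^0$. This is essentially the only obstacle; everything else is linear algebra performed at the level of symbols.

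Finally I would verify the shape of $T^{-1}AT$ by direct multiplication, exactly as in the proof of Proposition~\ref{prop:PrepSchur}. Since $(1,\mu_2)^T$ is, by construction, an eigenvector of $A$ with eigenvalue $\lambda_1$, the first column of $AT$ equals $\lambda_1(1,\mu_2)^T$, while the second column of $AT$ is the second column of $A$; thus
\[
	AT = \begin{bmatrix} \lambda_1 & a_{12} \\ \lambda_1 \mu_2 & a_{22} \end{bmatrix}, \qquad T^{-1}AT = \begin{bmatrix} \lambda_1 & a_{12} \\ 0 & a_{22} - \mu_2 a_{12} \end{bmatrix}.
\]
The $(2,1)$ entry cancels precisely because $(1,\mu_2)^T$ is an eigenvector, and since conjugation preserves the trace, $a_{22} - \mu_2 a_{12} = \tr A - \lambda_1 = \lambda_2$. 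This yields the asserted upper triangular form with $(1,2)$ entry $a_{12} \in C S^1$ and diagonal $\lambda_1,\lambda_2 \in C S^1$, valid on $[0,T]\times\R^n\times\{|\xi|\geq M\}$, and recovers \cite[Theorem 7.1]{GramRuz2013}.
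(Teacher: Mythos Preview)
Your proof is correct and follows essentially the same route as the paper's own argument: reduce to the case $j=1$ by a permutation, normalise the eigenvector so its first component is $1$, take $T$ lower triangular with this normalised eigenvector as its first column, and identify the $(2,2)$ entry of $T^{-1}AT$ via the trace. The only difference is cosmetic---you streamline the matrix computation and add a remark on why $\mu_2\in C S^0$ that the paper leaves implicit.
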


\begin{proof} For $2\times 2$ matrices the triangularisation procedure described in the previous subsection can stop at Step 1. By Remark \ref{rem:Transform}, we may assume that \eqref{cond:EVnonzero} holds for the eigenvector $h$ corresponding to $\lambda_1$ and for $j=1$. We set $h=h_1$ and $h^{(1)}= h_1$. The vector \begin{equation*}
		\omega_1 = \begin{bmatrix}
			\omega_{11}(t,x,\xi) \\ \omega_{12}(t,x,\xi)
		\end{bmatrix}, \quad \quad \omega_{1j}(t,x,\xi) = \frac{\la h^{(1)}(t,x,\xi) | e_j \ra}{\la h^{(1)}(t,x,\xi) | e_1 \ra},
	\end{equation*} belongs to $C S^0$ and is an eigenvector of $A$ associated to $\lambda_1$. We then set \begin{equation*}
	T_1(t,x,\xi) = \begin{bmatrix}
		\omega_1 & e_2
	\end{bmatrix} = \begin{bmatrix}
	\omega_{11}(t,x,\xi) & 0 \\
	\omega_{12}(t,x,\xi) & 1
\end{bmatrix}.
\end{equation*} With that, we obtain \begin{equation*}
A(t,x,\xi) T_1(t,x,\xi) = \begin{bmatrix}
	a_{11}\omega_{11} + a_{12}\omega_{12} & a_{12} \\
	a_{21}\omega_{11} + a_{22}\omega_{12} & a_{22}
\end{bmatrix}
\end{equation*} and finally, with \begin{equation*}
T_1^{-1}(t,x,\xi) = \begin{bmatrix}
	\omega_{11}(t,x,\xi) & 0 \\
	-\omega_{12}(t,x,\xi) & 1
\end{bmatrix},
\end{equation*} we obtain \begin{equation*} \begin{aligned}
	& T_1^{-1}(t,x,\xi)A(t,x,\xi)T_1(t,x,\xi) \\
	& \qquad = \begin{bmatrix}
		a_{11}\omega_{11}^2+a_{12}\omega_{12}\omega_{11} & a_{12}\omega_{11} \\
		-a_{11}\omega_{12}\omega_{11}-a_{12}\omega_{12}^2 + a_{21}\omega_{11} + a_{22}\omega_{12} & -\omega_{12}a_{12}+a_{22}
	\end{bmatrix}
\end{aligned}
\end{equation*} By construction, we have \begin{equation*} \begin{aligned}
	& a_{11}\omega_{11} + a_{12}\omega_{12} = \lambda_1 \omega_{11}, \\
	& a_{21}\omega_{11} + a_{22}\omega_{12} = \lambda_1 \omega_{12},
\end{aligned}
\end{equation*}
and $\omega_1=1$. This yields $a_{11}\omega_{11} + a_{12}\omega_{12} = \lambda_1 \omega_{11} = \lambda_1$ and \begin{equation*}
\begin{aligned}
	&-a_{11}\omega_{12}\omega_{11}-a_{12}\omega_{12}^2 + a_{21}\omega_{11} + a_{22}\omega_{12}\\
	& \qquad = -\omega_{12}(a_{11}\omega_{11}+a_{12}\omega_{12}) + a_{21}\omega_{11} + a_{22}\omega_{12} = -\lambda_1 \omega_{12} + \lambda_1 \omega_{12} = 0.
\end{aligned}.
\end{equation*} Using $a_{11}+a_{22} = \lambda_1 + \lambda_2$, we obtain \begin{equation*}
-\omega_{12}a_{12}+a_{22} = -\omega_{12}a_{12}+a_{22} + a_{11}\omega_{11} - a_{11}\omega_{11} = \lambda_2.
\end{equation*} Thus, we get that \begin{equation*}
T_1^{-1}(t,x,\xi) A(t,x,\xi) T_1(t,x,\xi)
= \begin{bmatrix}
	\lambda_1(t,x,\xi) & a_{12}(t,x,\xi) \\
	0 & \lambda_2(t,x,\xi)
\end{bmatrix}
\end{equation*} for $(t,x,\xi) \in [0,T] \times \R^n \times \{ |\xi| \geq M \}$. This concludes the proof.
\end{proof}

\subsubsection{Example}
\begin{itemize}
\item[(i)]
By direct computations we can easily see that if $h_1=[h_{11}\,\, h_{12}]^T=e_1$ then the matrix $A$ is automatically in the upper triangular form. Indeed,  
\[
a_{21}h_{11}+a_{22}h_{12}=\lambda_1 h_{12}
\]
implies $a_{21}=0$.  A typical example (already discussed in \cite{GramRuz2013}) is the Jordan block matrix 
\[
A=\begin{bmatrix}
	0 & 1\\
	0 & 0
\end{bmatrix},
\]
where $\lambda_1=0$ is an eigenvalue with eigenvector $h_1=e_1$.
\item[(ii)] Condition \eqref{cond:EVnonzero} is trivially fulfilled when ${\rm det A}\equiv 0$ and $A$ is of the form
\[
\begin{bmatrix}
	a & a\\
	-a & -a
\end{bmatrix},
\]
for $a=a(t,x,\xi)$. Indeed, also in this case one can take $0$ as an eigenvalue with eigenvector $h_1= [1 \,\, 1]^T$.
\end{itemize}

\subsection{The case $m=3$}

With the notation introduced in Section \ref{sec:Procedure}, we assume that the $3 \times 3$ matrix $A(t,x,\xi) = [a_{ij}(t,x,\xi)]_{i,j=1}^3$ admits three eigenvalues $\lambda_i(t,x,\xi) \in C S^1$, $i=1,2,3$, and two corresponding eigenvectors $h_i(t,x,\xi) \in \big( C S^0) \big)^3$, $i=1,2$. Then, we set $h^{(1)} := h_1$ and, as in Remark \ref{rem_gen_cond} we suppose that there is a $j_1 \in \{1,2,3\}$ with \begin{equation*}
	\la h^{(1)}(t,x,\xi) | e_{j_1} \ra \neq 0 \quad \forall (t,x,\xi) \in [0,T] \times \R^n \times \{ |\xi| \geq M \}.
\end{equation*} Thus, we can set \begin{equation*}
\omega_{1j}(t,x,\xi) = \frac{\la h^{(1)}(t,x,\xi) | e_j \ra}{\la h^{(1)}(t,x,\xi) | e_{j_1} \ra}.
\end{equation*} Now, we rearrange the matrix $A$ such that the first component of $\omega_1$ becomes identically equal to $1$. Then, with $j_2,j_3 \in \{ 1,2,3 \} \setminus \{ j_1 \}$, we can write \begin{equation*}
T_1^{-1} = \begin{bmatrix}
	\omega_1 & e_2 & e_3
\end{bmatrix}^{-1} = \begin{bmatrix}
\omega_{1j_1} & 0 & 0 \\
-\omega_{1j_2} & 1 & 0 \\
-\omega_{1j_3} & 0& 1
\end{bmatrix} = \begin{bmatrix}
1 & 0 & 0 \\
-\omega_{1j_2} & 1 & 0 \\
-\omega_{1j_3} & 0& 1
\end{bmatrix},
\end{equation*} which leads to \begin{equation*}
T_1^{-1} h_2 = \begin{bmatrix}
	\omega_{1j_1} & 0 & 0 \\
	-\omega_{1j_2} & 1 & 0 \\
	-\omega_{1j_3} & 0 & 1
\end{bmatrix} \begin{bmatrix}
h_{2j_1} \\ h_{2j_2} \\ h_{2j_3}
\end{bmatrix} =  \begin{bmatrix}
h_{2j_1} \\ -\omega_{1j_2}h_{2j_1} + h_{2j_2} \\ -\omega_{1j_3}h_{2j_1} + h_{2j_3}
\end{bmatrix}.
\end{equation*} We then get \begin{equation*}
h^{(2)} = \Pi_1 T_1^{-1} h_2 = \begin{bmatrix}
-\omega_{1j_2}h_{2j_1} + h_{2j_2} \\ -\omega_{1j_3}h_{2j_1} + h_{2j_3}
\end{bmatrix}
\end{equation*} and the condition \eqref{eq:CondThmSchur} that there exists $j \in \{ 1,2 \}$ such that \begin{equation*}
\la h^{(2)}(t,x,\xi) | e_{j} \ra \neq 0 \quad \forall (t,x,\xi) \in [0,T] \times \R^n \times \{ |\xi| \geq M \}
\end{equation*} translates to: either \begin{equation} \label{eq:aux12x2}
	-\omega_{1j_2}h_{2j_1} + h_{2j_2} \neq 0 \quad \Rightarrow \quad h_{2j_2} h_{1j_1} - h_{1j_2}h_{2j_1} \neq 0
\end{equation} or \begin{equation} \label{eq:aux22x2}
	-\omega_{1j_3}h_{2j_1} + h_{2j_3} \neq 0 \quad \Rightarrow \quad h_{2j_3}h_{1j_1} - h_{1j_3}h_{2j_1} \neq 0
\end{equation} holds. Thus, assuming that \eqref{eq:aux12x2} holds, the matrix $\tilde{T}_2$ is given by \begin{equation*}
	\begin{bmatrix}
	\omega_{21} & 0 \\
	\omega_{21} & 1
	\end{bmatrix} = \begin{bmatrix}
	1 & 0 \\
	\frac{-\omega_{1j_3}h_{2j_1} + h_{2j_3}}{-\omega_{1j_2}h_{2j_1} + h_{2j_2}} & 1
	\end{bmatrix}, \quad \omega_{2j} = \frac{\la h^{(2)}(t,x,\xi)|e_j\ra}{\la h^{(2)}(t,x,\xi)|e_{j_2}\ra}, \quad j=1,2,
\end{equation*} and the matrix $T_2$ by \begin{equation*}
	\begin{bmatrix}
	1 & 0 & 0 \\
	0 & \omega_{21} & 0 \\
	0 & \omega_{22} & 1 \\
	\end{bmatrix} = \begin{bmatrix}
	1 & 0 & 0 \\
	0 & 1 & 0 \\
	0& \frac{-\omega_{1j_3}h_{2j_1} + h_{2j_3}}{-\omega_{1j_2}h_{2j_1} + h_{2j_2}} & 1
	\end{bmatrix}.
\end{equation*} Thus, we obtain \begin{equation} \label{eq:auxTrafo}
T(t,x,\xi) = T_1T_2 = \begin{bmatrix}
1 & 0 & 0 \\
\omega_{1j_2} & 1 & 0 \\
\omega_{1j_3} & 0& 1
\end{bmatrix} \begin{bmatrix}
1 & 0 & 0 \\
0 & 1 & 0 \\
0& \frac{-\omega_{1j_3}h_{2j_1} + h_{2j_3}}{-\omega_{1j_2}h_{2j_1} + h_{2j_2}} & 1
\end{bmatrix}.
\end{equation} If we have \eqref{eq:aux22x2} instead of \eqref{eq:aux12x2}, then we would need a permutation matrix \begin{equation*}
	P_{j_{2} \leftrightarrow j_3} = \begin{bmatrix}
	1 & 0 & 0 \\
	0 & 0 & 1 \\
	0 & 1 & 0
	\end{bmatrix} \end{equation*} in \eqref{eq:auxTrafo}, i.e. \begin{equation*}
	T(t,x,\xi) = T_1(t,x,\xi)P_{j_{2} \leftrightarrow j_3} T_2(t,x,\xi)
\end{equation*} and \begin{equation*}
	T_2(t,x,\xi) = \begin{bmatrix}
	1 & 0 & 0 \\
	0 & 1 & 0 \\
	0& \frac{-\omega_{1j_2}h_{2j_1} + h_{2j_2}}{-\omega_{1j_3}h_{2j_1} + h_{2j_3}} & 1
	\end{bmatrix}.
\end{equation*} See also Remark \ref{rem_gen_cond}.



Thus, we can state \begin{thm}
	Suppose that $A(t,x,\xi) = [a_{ij}]_{i,j=1}^3$ admits three eigenvalues $\lambda_i \in C S^1$, $i=1,2,3$, and two corresponding eigenvectors $h_i(t,x,\xi) \in \big( C S^1 \big)^3$, $i=1,2$.  Suppose that there exists a $j_1 \in \{ 1, 2, 3\}$ such that \begin{equation} \label{cond_1_3}
		h_{1j_1}(t,x,\xi) \neq 0 \quad \forall (t,x,\xi) \in [0,T] \times \R^n \times \{ |\xi| \geq M \}.
	\end{equation} Further suppose that there exists $j_2 \in \{ 1,2,3\} \setminus \{ j_1 \}$ such that \begin{equation} \label{cond_2_3}
	h_{2j_2}h_{1j_1} - h_{1j_2} h_{2j_1} \neq 0 \quad \forall (t,x,\xi) \in [0,T] \times \R^n \times \{ |\xi| \geq M \}.
\end{equation} Then, there exists a matrix-valued symbol $T(t,x,\xi) = [t_{ij}]_{i,j=1}^3$, $t_{ij} \in C S^0$, invertible for all $(t,x,\xi) \in [0,T] \times \R^n \times \{ |\xi| \geq M \}$, such that \begin{equation*}
T^{-1}(t,x,\xi)A(t,x,\xi)T(t,x,\xi) = \Lambda(t,x,\xi) + N(t,x,\xi)
\end{equation*} holds for all $(t,x,\xi) \in [0,T] \times \R^n \times \{ |\xi| \geq M \},$ where $\Lambda(t,x,\xi) = \diag(\lambda_1, \lambda_2, \lambda_3)$ and \begin{equation*}
N(t,x,\xi) = \begin{bmatrix}
	0 & N_{13}(t,x,\xi) & N_{13}(t,x,\xi) \\
	0 & 0 & N_{23}(t,x,\xi) \\
	0 & 0 & 0
\end{bmatrix}.
\end{equation*}
\end{thm}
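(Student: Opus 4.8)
The proof is a direct transcription of the $m=3$ construction carried out in this subsection (and, more generally, of the procedure of Section~\ref{sec:Procedure}), so the plan is simply to run that construction and then check the regularity and invertibility bookkeeping. First I would record that hypothesis~\eqref{cond_1_3} is nothing but the $m=3$ instance of the non-degeneracy condition~\eqref{eq:EVneq0} of Proposition~\ref{prop:PrepSchur} (with $j=j_1$). Setting $h^{(1)}:=h_1$ and conjugating $A$ by the constant permutation matrix that brings the $j_1$-th component of $h^{(1)}$ into first position (Remark~\ref{rem:Transform}), Proposition~\ref{prop:PrepSchur} then yields an invertible matrix symbol $T_1$, with entries $\omega_{1j}=\la h^{(1)}|e_j\ra/\la h^{(1)}|e_{j_1}\ra\in C S^0$, such that $T_1^{-1}AT_1$ has $\lambda_1$ in the $(1,1)$ slot, zeros below it in the first column, and a $2\times2$ block $E_2\in(C S^1)^{2\times2}$ in the lower-right corner whose eigenvalues are $\lambda_2,\lambda_3$.

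Next I would carry out the second step. Since $h_2$ is an eigenvector of $A$ for $\lambda_2$, the vector $h^{(2)}:=\Pi_1T_1^{-1}h_2$ is an eigenvector of $E_2$ for $\lambda_2$, and the elementary computation preceding the theorem gives $h^{(2)}=\big[-\omega_{1j_2}h_{2j_1}+h_{2j_2},\ -\omega_{1j_3}h_{2j_1}+h_{2j_3}\big]^{T}$. Clearing the nonvanishing denominator $\la h^{(1)}|e_{j_1}\ra$, the requirement~\eqref{eq:CondThmSchur} for $i=2$, namely $\la h^{(2)}|e_1\ra\ne0$ or $\la h^{(2)}|e_2\ra\ne0$, becomes exactly the alternative~\eqref{eq:aux12x2} or~\eqref{eq:aux22x2}, and~\eqref{eq:aux12x2} is precisely hypothesis~\eqref{cond_2_3}; if only~\eqref{eq:aux22x2} holds I would first conjugate $E_2$ by the permutation $P_{j_2\leftrightarrow j_3}$. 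In either case Proposition~\ref{prop:PrepSchur} applied to the $2\times2$ matrix $E_2$ produces an invertible $\tilde T_2$ with $C S^0$ entries, and with $T_2=\begin{bmatrix}1 & \mathbf{0}\\ \mathbf{0} & \tilde T_2\end{bmatrix}$ (adjusted by $P_{j_2\leftrightarrow j_3}$ if needed) the matrix $T_2^{-1}T_1^{-1}AT_1T_2$ is upper triangular with diagonal entries $\lambda_1,\lambda_2$ and third entry equal to the remaining eigenvalue $\lambda_3$, by the eigenvalue-tracking part of Proposition~\ref{prop:PrepSchur}. Taking $T=T_1T_2$ (resp.\ $T_1P_{j_2\leftrightarrow j_3}T_2$, as in~\eqref{eq:auxTrafo}) then gives $T^{-1}AT=\Lambda+N$ with $\Lambda=\diag(\lambda_1,\lambda_2,\lambda_3)$ and $N$ strictly upper triangular; the entries $t_{ij}$ of $T$, being products of the $C S^0$ symbols $\omega_{kj}$ and entries of permutation matrices, lie in $C S^0$, while the three superdiagonal entries $N_{12},N_{13},N_{23}$ of $N$ are built from the original $C S^1$ symbols, hence lie in $C S^1$.

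I do not anticipate a serious obstacle, as the computations are already laid out above; the two points that require a careful word are (i) the claims that the ratios $\omega_{kj}$ are genuinely of order zero and that $T_1,\tilde T_2$ --- and hence $T$ --- are invertible on $\{|\xi|\ge M\}$, which is where the uniform nonvanishing packaged in~\eqref{cond_1_3} and~\eqref{cond_2_3} is used (a ratio of symbols whose denominator is bounded below by a constant multiple of $\la\xi\ra$ there is again a symbol of the appropriate lower order, and a matrix whose determinant is such a nonvanishing symbol is invertible), and (ii) the bookkeeping of the permutation matrix $P_{j_2\leftrightarrow j_3}$ so that the statement is symmetric under $j_2\leftrightarrow j_3$. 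The first point is mildly technical; the second is purely routine.
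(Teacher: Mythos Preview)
Your proposal is correct and follows essentially the same approach as the paper: the theorem is simply a restatement of the two-step construction worked out immediately before it, and you correctly identify that \eqref{cond_1_3} is the hypothesis of Proposition~\ref{prop:PrepSchur} for the first Schur step, while \eqref{cond_2_3} is exactly the alternative \eqref{eq:aux12x2}/\eqref{eq:aux22x2} needed for the second step applied to $E_2$, with the permutation $P_{j_2\leftrightarrow j_3}$ inserted as appropriate. The regularity and invertibility bookkeeping you outline (ratios $\omega_{kj}\in CS^0$ because the denominators are nonvanishing, $T=T_1T_2$ invertible with $CS^0$ entries, $N$ with $CS^1$ entries) matches the paper's treatment.
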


We end this subsection by discussing some examples of  $3\times 3$ matrices fulfilling the assumptions above on their eigenvalues. 

\subsubsection{Examples}
\begin{itemize}
\item[(i)] If the matrix $A$ has eigenvectors \begin{equation*}
	h_1= \begin{bmatrix}
	1\\ 0\\ 1
	\end{bmatrix} \quad \text{and} \quad h_2= \begin{bmatrix}
	1\\ 1\\	0
	\end{bmatrix}
\end{equation*} then conditions \eqref{cond_1_3} and \eqref{cond_2_3} are easily fulfilled with $j_1=1$ and $j_2=2$. Indeed, $h_{11}=1$ and
\[
h_{22}h_{11}-h_{12}h_{21}=h_{22}h_{11}=1.
\]
More in general to satisfy \eqref{cond_1_3} and \eqref{cond_2_3}  it would be enough to have two eigenvectors \begin{equation*}
	h_1= \begin{bmatrix}
	h_{11}\\ h_{12}\\ h_{13}
	\end{bmatrix} \quad \text{and} \quad h_2= \begin{bmatrix}
	h_{21}\\ h_{22}\\ h_{23}
	\end{bmatrix}
\end{equation*} with $h_{11}\neq 0$, $h_{22}\neq 0$ and $h_{12}=0$.

\item[(ii)] A matrix with eigenvectors \begin{equation*}
	h_1= \begin{bmatrix}
	1\\ 0\\ 1
	\end{bmatrix} \quad \text{and} \quad h_2= \begin{bmatrix}
	1\\ 1\\ 0
	\end{bmatrix}
\end{equation*} has a special form. Indeed, for $\lambda_1$ and $\lambda_2$ eigenvalues corresponding to $h_1$ and $h_2$, respectively, by using the eigenvector equations we obtain
\[
\begin{split}
a_{13}&=\lambda_1-a_{11},\\
a_{23}&=-a_{21},\\
a_{33}&=\lambda_1-a_{31},
\end{split}
\]
and
\[
\begin{split}
a_{12}&=\lambda_2-a_{11},\\
a_{22}&=\lambda_2-a_{21},\\
a_{32}&=-a_{31}.
\end{split}
\]
Hence
\[
A=\begin{bmatrix}
	a_{11} & \lambda_2-a_{11} & \lambda_1-a_{11}\\ 
	a_{21} & \lambda_2-a_{21} & -a_{21}\\
a_{31} & -a_{31} & \lambda_1-a_{31} 
\end{bmatrix}. 
\]

\end{itemize}

\end{document}